\renewcommand{\O}{\mathscr{O}}
\newcommand{\simto}{\stackrel{\sim}{\to}}
\DeclareMathOperator\Ker{Ker}
\newcommand\Ns{\mathscr{N}}
\newcommand{\scomm}[1]{
{\red \framebox{\framebox{\framebox{
#1}}}}
}
\newcommand{\comm}[1]{
\noindent{\magenta \framebox{\framebox{\framebox{
\begin{minipage}{450pt}#1
\end{minipage}}}}}
}
\newcommand{\RMod}{R\textrm{-Mod}}
\newcommand{\Rmod}{R\textrm{-Mod}}
\newcommand{\Hom}{\mathrm{Hom}}
\newcommand{\LC}{\mathcal{LC}_K}
\newcommand{\Top}{\mathcal{T}_{\O_K}}
\newcommand{\CLC}{\widehat{\mathcal{LC}}_K}
\def\swappedhead#1#2#3{%
  % original definition:
  % \thmnumber{\@upn{\the\thm@headfont#2\@ifnotempty{#1}{.~}}}%
  % change:
  \thmname{#1}\;%
  \thmnumber{\@upn{\the\thm@headfont#2\@ifnotempty{#1}}}%
  \thmnote{\,{\the\thm@notefont(#3)}}{.~}}
\newtheoremstyle{dotless-thm}
  {10pt}
  {10pt}
  {\itshape}
  {}
  {\bfseries}
  {}
  {.0em}
  {}
\theoremstyle{dotless-thm}
\newtheorem{theorem}{\textbf{Theorem}}[subsection]
\newtheorem{thm-intro}{\textbf{\textsc{Theorem}}}
\newtheorem{rk-intro}[thm-intro]{\textbf{\textsc{Remark}}}
\newtheorem{cor-intro}[thm-intro]{\textbf{\textsc{Corollary}}}
\newtheorem{proposition}[theorem]{\textbf{Proposition}}
\newtheorem{lemma}[theorem]{\textbf{Lemma}}
\newtheorem{corollary}[theorem]{\textbf{Corollary}}
\newtheorem{definition}[theorem]{\textbf{Definition}}
\newtheorem{remark}[theorem]{\textbf{Remark}}
\newtheorem{example}[theorem]{\textbf{Example}}
\newtheorem{notation}[theorem]{\textbf{Notation}}
\numberwithin{equation}{section}
\title[Uncountable Mittag-Leffler and applications to locally convex vector spaces]{An uncountable 
Mittag-Leffler condition\\  with an application to\\ ultrametric 
locally convex vector spaces.}
\author{Andrea Pulita}
\email{andrea.pulita@univ-grenoble-alpes.fr}
\address{Université Grenoble Alpes,
Institut Fourier,
CS 40700,
38058 Grenoble cedex 9}
\date{\today}
\subjclass{Primary 12h25; Secondary 14G22}
\keywords{Uncountable Mittag-Leffler, Mittag-Leffler, inverse limit, projective limit, 
exactness of inverse limit, locally convex spaces, closed image, Fréchet spaces.}
\begin{abstract}
Mittag-Leffler condition ensures the exactness of the  inverse
limit of short exact sequences indexed on a partially 
ordered set $(I,\leq)$ admitting a 
\emph{countable} cofinal subset.
We extend Mittag-Leffler condition by
relatively relaxing the countability assumption. 
As an application we prove an ultrametric analogous of a 
result of V.P.Palamodov in relation 
with the acyclicity of Frechet 
spaces with respect to the completion functor. 
%\comm{Sopprimere Palamodov ? }
\end{abstract}
\begin{document}
\maketitle

\begin{center}
Version of \today
\end{center}

\makeatletter
\renewcommand\tableofcontents{%
    \subsection*{\contentsname}%
    \@starttoc{toc}%
    }
\makeatother

%\if{
\begin{small}
\setcounter{tocdepth}{3} \tableofcontents
\end{small}
%}\fi

\setcounter{section}{0}

%\newpage

\section*{\textsc{Introduction}}
\addcontentsline{toc}{section}{\textsc{Introduction}}

In several mathematical theories one encounters objects 
defined as inverse limits. Typically this happens in sheaf 
theory, where the set of global sections of a 
sheaf is the inverse limit of the local ones. 
Analogous structures actually largely appear in several 
theories such as topos theory, linear algebra, 
algebraic geometry, functional analysis and many others. 
%Indeed, in the spirit of 
%\scomm{SGA - ref}, every left exact 
%functor can be considered as a pre-sheaf.
Limits contain crucial information of the 
original systems and it is interesting to study what 
properties are lost in the limit process. 
One of these is \emph{the exactness of short 
exact sequences}.
%, which turns out to be 
%strongly related to the \emph{non emptiness of certain 
%inverse limits of sets}. 
The importance of this property 
is illustrated again by the example of 
sheaves theory, where 
there is an entire cohomology theory 
devoted to ``\emph{measure}'' 
the default of exactness of the global section functor. 
More specifically, we are interested here in a precise 
criterion, originally due to Mittag-Leffler 
\cite[II.19, $N^o5$, Exemple]{Bou-TG}, 
ensuring that the exactness of short exact sequences is 
preserved when passing to the limit. 
Here is the classical Mittag-Leffler 
statement\footnote{Following the tradition, 
we state it for $R$-modules. However, it holds more 
generally for inverse systems of topological groups and 
certain Abelian categories as considered 
in \cite{Roos-4}. 
%In Section \ref{Rk:extension} we extend it to 
%this context.
} 
\begin{thm-intro}[Classical Mittag-Leffler]\label{THM1}
Let $R$ be a ring with unit element and 
let $(I,\leq)$ be a directed\footnote{The word directed means that for all $i,j\in I$ there exists 
$k\in I$ such that $k\geq i$ and $k\geq j$.} partially ordered 
set. 
Let $(\rho^A_{i,j}:A_i \to A_j)_{i,j\in I}$, 
$(\rho^B_{i,j}:B_i \to B_j)_{i,j\in I}$ 
and $(\rho^C_{i,j}:C_i \to C_j)_{i,j\in I}$ 
be three inverse systems of 
left (or right) %unitary\footnote{Unitary means that the 
%scalar multiplication by the unit element of $R$ is the 
%identity map of the $R$-module.} 
$R$-modules indexed on $I$. For all $i\in I$ consider an exact 
sequence $0\to A_i\xrightarrow{g_i}B_i\xrightarrow{h_i} C_i\to 0$ compatible with the 
transition maps of the systems\footnote{i.e. for all $j\geq i\in I$ one has $g_i\circ \rho^A_{j,i}=\rho^B_{j,i}\circ g_j$ and $h_i\circ \rho^B_{j,i}=\rho^C_{j,i}\circ h_j$}. Assume that 
\begin{enumerate}
\item There exists a  cofinal\footnote{A subset 
$J\subseteq I$ is cofinal if for all $i\in I$ there exists 
$j\in J$ such that $j\geq i$.} subset of 
$I$ which is at most countable;
\item For all $i\in I$, there exists $j\geq i$ such that for all 
$r\geq j$ one has 
\begin{equation}\label{eq :ML condition}
\rho^A_{j,i}(A_j)=\rho^A_{r,i}(A_r)\;.
\end{equation}
\end{enumerate}
Then, the short sequence of limits 
\begin{equation}
0\xrightarrow{\;\;\;}\varprojlim_{i\in I}A_i\xrightarrow{\; g\;}\varprojlim_{i\in I}B_i\xrightarrow{\; h\;}\varprojlim_{i\in I}C_i\xrightarrow{\;\;\;} 0
\end{equation}
is exact and the first derived functor $\varprojlim_{i\in I}^{(1)}$ of 
$\varprojlim_{i\in I}$ vanishes at $(A_i)_i$ : 
$\varprojlim_{i\in I}^{(1)}A_i=0$.
\end{thm-intro}
%We recall that the order relation on $I$ gives rise to a topology on the set $I$ such that sheaves of $R$-modules 
%for this topology correspond bijectively to inverse systems 	of $R$-modules indexed on $(I,\leq)$ (cf. \cite[p.4]{Jensen}). 

The condition ii) of the theorem 
is not a necessary condition for the vanishing 
of $\varprojlim_{i\in I}^{(1)}$. Actually,  if $I$ is the set 
of natural numbers $\mathbb{N}=\{1,2,3,\ldots\}$, then 
condition ii) characterizes 
inverse systems $(A_i)_i$ satisfying 
$\lim_{i\in I}^{(1)}A_i\otimes E=0$ for all $R$-module 
$E$ (cf. \cite{EMMANOUIL}).  
%The proof of Theorem \ref{THM1} consists in two steps. 
On the other hand, condition i) is quite restrictive. 
From it, one deduces the existence of a map 
$\tau:\mathbb{N}\to I$ 
respecting the order relation whose image is 
a cofinal subset of $I$ (cf. Lemma 
\ref{Lemma: directed subset N}). 
The existence of $\tau$ is a strong condition because it 
implies that \emph{for all inverse systems} 
$(Q_i)_{i\in I}$  of $R$-modules 
and for all $n\geq 0$ we have a canonical 
isomorphism 
$\varprojlim_{i\in I}^{(n)}Q_i\cong
\varprojlim_{i\in \mathbb{N}}^{(n)}Q_i$ 
between the $n$-th 
derived functors of $\varprojlim$  
(cf. \cite[Theorem B]{Mitchell}).  Hence, 
from a cohomological point of view, 
\emph{inverse systems over $I$ are indistinguishable 
from those  
over $\mathbb{N}$}. 
In particular, the claim implies 
$\varprojlim_{i\in I}^{(n)}A_i=0$, for all integer 
$n\geq 2$, because this is true for every inverse system of modules indexed by $\mathbb{N}$ (cf. 
\cite{Mitchell}, 
see below).

The proof of Mittag-Leffler Theorem deals with 
the surjectivity of the 
map $h$ by a quite explicit set-theoretical argument. Namely, if 
$x=(x_i)_{i\in \mathbb{N}}\in\varprojlim_{i\in \mathbb{N}}C_i$, then the inverse images 
$h_i^{-1}(x_i)\subset S_i$ form an inverse system of 
\emph{sets}, whose inverse limit verifies 
$h^{-1}(x)=\varprojlim_{i\in\mathbb{N}}h_i^{-1}(x_i)$. 
The fact that this limit of \emph{sets} is not empty 
follows from the fact that this system is ``\emph{locally}''\footnote{The word locally here has a precise meaning. It is 
possible to associate to $(I,\leq)$ a topology on $I$ such 
that sheaves on $I$ with respect to this topology are 
exactly inverse systems indexed on $I$. In this 
correspondence, the global sections of a sheaf over $I$ 
is exactly the inverse limit of the associated systems 
(cf. \cite[p.4]{Jensen}, see Section \ref{Section : inv syst and sh}).} 
isomorphic to $(A_i)_{i\in \mathbb{N}}$ 
and condition ii) allows us to 
replace this system by a system of sets  indexed on 
$\mathbb{N}$  with surjective 
transition maps, which 
obviously has a non 
empty inverse limit.

\if{This implies that 
$\varprojlim_{i\in\mathbb{N}}^{(1)}K'_{i}=0$ by an
explicit argument based on the fact for an inverse system 
\emph{of sets} (not $R$-modules) 
indexed on $\mathbb{N}$ with surjective 
maps always has a \emph{non empty} limit 
(cf. \scomm{\ref{}}).
More specifically, the inverse image in 
$(S_i)_{i\in\mathbb{N}}$ of a 
sequence  
$(x_i)_{i\in \mathbb{N}}\in\varprojlim_{i\in \mathbb{N}}T_i$ is an
inverse system \emph{of sets} which is 
``\emph{locally on $\mathbb{N}$}" 
isomorphic to $(S'_i)_{i\in\mathbb{N}}$, in particular it 
has surjective transition morphisms which allows to 
construct step by step a compatible sequence 
$(y_i)_{i\in\mathbb{N}}\in \varprojlim_{i\in\mathbb{N}}S_i
=\varprojlim_{i\in I}S_i$ 
which lies in the inverse image 
of $(x_i)_{i\in\mathbb{N}}$. The map $g$ is then 
surjective.
}\fi
\medskip

In this paper we are interested in extending this statement \emph{relaxing the countability 
condition i) of Theorem \ref{THM1}}.  
The situation is indeed more dangerous because, for 
instance, there are explicit non trivial examples of 
inverse systems \emph{of sets} indexed on some uncountable poset $I$ 
with \emph{surjective transition maps} whose inverse 
limit is empty (cf. \cite[III.94, Exercice 4-d)]{Bou-ENS}), 
so that the last part of the above proof is strongly jeopardized. 
Indeed, without the countability assumption i), there are actually few results in literature 
ensuring the non vanishing of an inverse limit of sets. 
The more important ones seem due to Bourbaki \cite[III.57, $\S7$, N.4, Théorème 1% and III.94, Exercise 5
]{Bou-ENS} and \cite[TG.17, $\S3$, N.5, Théorème 1]{Bou-TG}
and impose strong conditions on the sets and the maps, 
conditions that we can classify as of finiteness in nature. 
For instance, it applies to inverse systems of 
finite sets, finite groups, Artinian modules (cf. \cite[III.60, $\S7$, N.4, Examples]{Bou-ENS}) 
or to compact topological spaces \cite[I.64, $\S9$, $N.6$, Proposition 8]{Bou-TG}.\footnote{See also the more general 
case of linearly compact modules with continuous maps \cite[Théorème 7.1, p.57]{Jensen}.} 
%\cite[II.17, $\S3$, $N.7$, Theorem 1]{Bou-TG} where the maps are supposed to have \emph{dense image} QUESTO E' COUNTABLE;

\if{We notice that they all require some strong conditions on 
the objects. For instance, a consequence of those criteria 
is the well known theorem claiming that the conclusion of 
Theorem \ref{THM1} remains true replacing condition i) 
by the fact that $C_i$ is an 
Artinian $R$-module for all $i\in I$ (cf. \cite{}).}\fi

These issues show that without countability 
assumption on $I$ the first derived functor 
$\varprojlim_{i\in I}^{(1)}A_i$ 
possibly not vanishes for an inverse system with 
surjective transition maps. 
Therefore, several authors addressed 
the question of what can be said about the smallest natural number $s\geq 0$ such that for all $m\geq s$ and all inverse systems $(M_i)_{i\in I}$ one has $\varprojlim_{i\in I}^{(m)}M_i=0$ (this number is called 
\emph{cohomological dimension of the poset $I$}).
Barry Mitchel proved that if $(I,\leq)$ admits a cofinal 
subset of cardinal $\aleph_n$, 
and if $n$ is the smallest natural number 
with this property, 
then for all $k\geq n+2$, the $k$-derived functor 
$\varprojlim_{i\in I}^{(k)}$ vanishes on \emph{every} inverse system of $R$-modules (cf. 
\cite{Mitchell}, extending previous results of J-E. Roos \cite{Roos-1,Roos-2,Roos-3,Roos-4}, 
\cite{Goblot} and Jensen 
\cite[Proposition 6.2, p.53]{Jensen}). On the other hand, it is known that for any given ring $R$, one 
can find a partially ordered set $(I,\leq)$ and an inverse 
system $(M_i)_{i\in I}$ of $R$-modules indexed by $I$ 
such that for all $n\geq 0$ the $n$-th derived limit 
$\varprojlim_{i\in I}^{(n)}M_i$ is not 
zero \cite[Proposition 6.1, p.51]{Jensen}.

In particular, this last result shows that for 
the vanishing of $\lim_{i\in I}^{(1)}A_i$ in Theorem 
\ref{THM1}, 
some \emph{finiteness} condition is really 
needed either on the set 
$I$, or on the objects, or on the transition maps. 
For instance, the countability 
condition i) in Theorem \ref{THM1} can be seen as a 
finiteness assumption on the set $I$ and 
condition ii) is a finiteness condition on the transition 
maps. 
On the other hand, 
the quoted statements of Bourbaki, or their 
consequence for Artinian $R$-modules, 
can be considered 
as finiteness condition on the 
nature of the objects $A_i$.  

Surprisingly enough, if $I$ does not contain any 
cofinal countable subset and if no condition about on $R$  
and the modules $A_i$ are made, 
then in our knowledge no  
statement ensuring the vanishing of 
$\varprojlim_{i\in I}^{(1)}A_i$ exists in literature. 
Nevertheless, in this general context, 
there are interesting cases of inverse 
systems
\if{$(\rho_{i,j}^A:A_i\to A_j)_{i,j\in I}$ of 
$R$-modules indexed on a partially ordered 
set $I$ without countable cofinal subsets, 
whose objects do not have any reasonable finiteness 
property,}\fi  behaving very similarly to Mittag-Leffler 
ones just because much part of the restriction maps 
$\rho_{i,j}^A$ are isomorphisms and their limit is 
then \emph{``controlled'' by some countable 
subset of maps}. 
Situations of this type show up 
for instance in sheaf theory as pull-back of some sheaf 
on a Stain space, which actually inspired our approach to 
this problem. 
In Section \ref{Section : LC} we give an interesting 
example provided by the theory of 
ultrametric locally convex topological vector spaces. We 
prove an ulrametric 
analogous of a result of V.P.Palamodov 
\cite{Palamodov}, in relation with 
the acyclicity of Fréchet spaces 
with respect to the completion functor.
In that case, a direct set-theoretical attempt as in Bourbaki is unhelpful as one can easily see. % and
% the definition of $\varprojlim_{i\in I}^{(1)}A_i$ 
%requires homological algebra.
%, where \v{C}ech complex controls the 
%vanishing of the cohomology groups.

We provide here two generalizations of Theorem 
\ref{THM1} to the case of an uncountable $I$ 
without countable cofinal subsets 
that only involve a finiteness condition on the 
transition maps of the system $(A_i)_{i\in I}$ 
and no conditions on $I$ nor on the objects.
\begin{thm-intro}[cf. Corollary \ref{Cor: THM 2}]\label{THM2}
Let $R$ be a ring with unit element and 
let $(I,\leq)$ be a directed partially ordered set. 
Let $(\rho^A_{i,j}:A_i \to A_j)_{i,j\in I}$
%, $(\rho^B_{i,j}:B_i \to B_j)_{i,j\in I}$ 
%and $(\rho^C_{i,j}:C_i \to C_j)_{i,j\in I}$ 
be an %three 
inverse systems of left (or right) $R$-modules
indexed on $I$. 
%For all $i\in I$ consider an exact 
%sequence 
%$0\to A_i\xrightarrow{f_i}B_i\xrightarrow{g_i} 
%C_i\to 0$ 
%compatible with the transition maps of the systems. 

Assume that there exists another directed 
partially ordered set 
$(J,\leq)$ together with 
an inverse system of %sets 
$R$-modules 
$(\rho^{S}_{i,j}:S_i\to S_j)_{i,j\in J}$ such that
\begin{enumerate}
\item There exists a cofinal directed subset 
$I'\subseteq I$, a cofinal directed subset $J'\subseteq J$
and a surjective map preserving the order relation
\begin{equation}
p\;:\;I'\to J'\;;
\end{equation} 
\item There exists a system of 
%set-theoretic bijections
$R$-linear isomorphisms 
$(\psi_i:A_i\simto 
S_{p(i)})_{i\in I'}$ such that for all $i,j\in I'$ with 
$i\geq j$ one has a commutative diagram %of sets
\begin{equation}
\xymatrix{
A_{i}\ar@{}[dr]|{\circlearrowleft}\ar[d]_{\rho_{i,j}^{A}}\ar[r]^{\psi_{i}}_{\sim}&
S_{p(i)}
\ar[d]^{\rho_{p(i),p(j)}^{S}}\\
A_{j}\ar[r]_{\sim}^{\psi_{j}}&S_{p(j)}}
\end{equation}
\end{enumerate}
Then, for all integer $n\geq 0$, we have a canonical isomorphism
\begin{equation}
\varprojlim_{i\in I}{\!}^{(n)}A_i
\;\xrightarrow{\;\sim\;}\;
\varprojlim_{j\in J}{\!}^{(n)}S_j.
\end{equation}
In particular, if the partially ordered set $J$ and the 
system $(S_j)_{j\in J}$ satisfy 
the conditions i) and ii) of Theorem 
\ref{THM1} respectively, 
then $\varprojlim_{i\in I}^{(n)}A_i=0$ for 
all $n\geq 1$.
\end{thm-intro}
\begin{thm-intro}[cf. Corollary \ref{Cor : THM 3}]
\label{THM3}
Let $R$ be a ring with unit element and 
let $(I,\leq)$ be a %directed 
partially ordered set. 
Let $(\rho^A_{i,j}:A_i \to A_j)_{i,j\in I}$
%, $(\rho^B_{i,j}:B_i \to B_j)_{i,j\in I}$ 
%and $(\rho^C_{i,j}:C_i \to C_j)_{i,j\in I}$ 
be an %three 
inverse systems of left (or right) $R$-modules
indexed on $I$. 
%For all $i\in I$ consider an exact 
%sequence 
%$0\to A_i\xrightarrow{f_i}B_i\xrightarrow{g_i} 
%C_i\to 0$ 
%compatible with the transition maps of the systems. 

Assume that there exists a \emph{directed} 
partially ordered set 
$(J,\leq)$ together with 
an inverse system of %sets 
$R$-modules 
$(\rho^{T}_{i,j}:T_i\to T_j)_{i,j\in J}$ such that
\begin{enumerate}
\item There exists a cofinal directed subset 
$I'\subseteq I$, a cofinal directed subset $J'\subseteq J$
and a map  preserving the 
order relation 
\begin{equation}
q\;:\;J'\to I'\;
\end{equation}
such that for all $i\in I'$, the set 
$U_i:=\{j\in J',q(j)\leq i\}$, endowed with the partial order induced by $J'$, satisfies at least one of the following 
conditions
\begin{enumerate}
\item $U_i$ is empty;
\item $U_i$ has a unique maximal element $r(i)$;
\item $U_i$ is directed, it has countable cofinal directed 
poset $J_i'$ and the system $(\rho^T_{j,k}:T_j\to T_k)_{j,k\in J_i'}$ satisfies \eqref{eq :ML condition}.
\end{enumerate}
\item For all $i\in I'$ 
there exists an $R$-linear isomorphisms\footnote{Notice that under condition $(a)$ we have $\varprojlim_{j\in U_i}T_{j}=0$, and under condition $(b)$ we have $\varprojlim_{j\in U_i}T_{j}=T_{r(i)}$.} 
$\phi_i:A_i\simto \varprojlim_{j\in U_i}T_{j}$ 
and  for all $k,i\in I'$ with 
$k\geq i$ one has a commutative diagram 
\begin{equation}\label{eq: diagram THM3}
\xymatrix{
A_{k}\ar@{}[drr]|{\circlearrowleft}\ar[d]_-{\rho_{k,i}^{A}}
\ar[rr]^-{\phi_{k}}_-{\sim}&&
\varprojlim_{j\in U_k}T_{j}
%T_{r(k)}
\ar[d]^{\rho^{q_*T}_{k,i}}\\
A_{i}\ar[rr]_-{\sim}^-{\phi_{i}}&&
\varprojlim_{j\in U_i}T_{j}
%T_{r(i)}
}
\end{equation}
where the right hand vertical arrow $\rho^{q_*T}_{k,i}$ 
is deduced by the universal properties of the 
limits.
\end{enumerate}
Then, for all integer $n\geq 0$, we have a canonical isomorphism
\begin{equation}
\varprojlim_{i\in I}{\!}^{(n)}A_i
\;\xrightarrow{\;\sim\;}\;
\varprojlim_{j\in J}{\!}^{(n)}T_j.
\end{equation}
In particular, if the partially ordered set $J$ and the 
system $(T_j)_{j\in J}$ satisfy 
the conditions i) and ii) of Theorem 
\ref{THM1} respectively, 
then $\varprojlim_{i\in I}^{(n)}A_i=0$ for 
all $n\geq 1$.
\end{thm-intro}
Remark that if $J'=\mathbb{N}$ the assumptions of 
Theorem \ref{THM3} are particularly easy.

It is not hard to see that the 
assumptions of Theorem \ref{THM1} imply those of 
Theorems \ref{THM2} and \ref{THM3}.
%(cf. \scomm{REF}), 
Therefore, they 
are both generalizations of Theorem \ref{THM1}. 
Indeed, if $I'\subseteq I$ is a
countable cofinal directed subset 
in Theorem \ref{THM1}, 
then the setting $(I'=J, p=q=id, S_i=A_i=T_i, 
\rho_{i,j}^{S}=\rho_{i,j}^{T}=\rho_{i,j}^{A}, \psi_i=\phi_i=id)$ 
satisfies the assumptions of 
Theorems \ref{THM2} and \ref{THM3}. 
Besides, it is clear that Theorems \ref{THM2} and \ref{THM3} allow the 
set $(I,\leq)$ to be arbitrarily large, while Theorem 
\ref{THM1} artificially forces it to be relatively small.

The proofs of these results rely on the fact that 
inverse systems indexed on $(I,\leq)$ 
can be seen as sheaves on a
topological space $X(I)$ canonically
associated to $(I,\leq)$. 
In this correspondence, inverse limits 
%$\varprojlim_{i\in I}(-)$ 
and their cohomology 
functors $\varprojlim_{i\in I}^{(n)}(-)$ 
coincide with sheaf cohomology groups $H^n(X(I),-)$.  
This coincidence of theories permits to 
apply all sheaf theoretic cohomological 
operations, such as, for instance, pull-back and push-forward.
Indeed, as the reader may recognize, 
condition ii) of Theorem \ref{THM2} 
expresses the idea that the system $(A_i)_{i\in I'}$, 
interpreted as a sheaf on $X(I')$, 
is isomorphic to the \emph{pull-back} 
of the system $(S_j)_{j\in J'}$ by the map 
$p:X(I')\to X(J')$. While in Theorem \ref{THM3}, 
the system $(A_i)_{i\in I'}$ is isomorphic to 
the \emph{push-forward} of $(T_j)_{j\in J}$ by the map 
$q:X(J')\to X(I')$. Actually, Theorem \ref{THM3} is 
a special case of a more general statement that 
holds for possibly non directed partially ordered sets 
and which does not assume specific conditions on $U_j$ (cf. Proposition \ref{Prop.: f-lower-star exact with condition fibers}). The fact that we move 
the set of indexes $I$ along pull-back and push-forward
is in contrast with Theorem 
\ref{THM1}, where one fixes the set of indexes once for 
all and there is no cohomological distinction between 
cohomology over $\mathbb{N}$ and over $I$. 
We show indeed that there is no danger in moving $I$ 
because, in this particular context, the pull-back and 
the push-forward operations behave much 
better than in a general topological space. 
Namely, they preserve cohomology under quite 
mild assumptions. 
Informally speaking, even though $X(I)$ is allowed to 
have an enormous amount of open 
subsets, from a cohomological point of view 
it behaves as a relatively tiny space.

\if{Now, an important difference between the two 
statements is, as we noticed, the fact 
that the existence of a cofinal inclusion $\tau:
\mathbb{N}\hookrightarrow I$ in Theorem 
\ref{THM1} is a condition which is so strong that the 
cohomology of \emph{every} inverse system on $I$ 
equals that of its restriction to $\mathbb{N}$. 
The key reason that allows this special situation is that 
the push-forward $\tau_*$ is an exact functor on the 
categories of sheaves (cf. Lemma \ref{} for more 
details). This is where cofinality condition i) of Theorem 
\ref{THM1} plays a major role.
\if{
Another important difference between the two 
statements is, as we noticed, the fact 
that condition i) of Theorem 
\ref{THM1} is so strong that 
it implies 
$\varprojlim_{i\in \mathbb{N}}^{(n)}
Q_i=
\varprojlim_{i\in I}^{(n)}Q_i$, for all inverse 
system $Q=(Q_i)_{i\in I}$ and all integer $n\geq 0$. 
This is due to the fact that the 
cofinality of $\mathbb{N}$ in $I$ implies that the 
pull-back and push-forward operations are both exact 
and so, the canonical morphism $Q\to i_*Q_{|\mathbb{N}}$  

the cohomology of 
\emph{every} inverse system 
$Q:=(Q_i)_{i\in I}$ on $I$ is the 
push-forward of that of 
an inverse system on $\mathbb{N}$. 
}\fi
This no more true in the setting of 
Theorem \ref{THM2}. Indeed, given any non 
cofinal inclusion $i:\mathbb{N}\subset I$, the 
push-forward $i_*$ 
of an inverse system indexed on $\mathbb{N}$ 
does not preserve its cohomology in general. 
In fact, the key idea here is that, 
instead, the pull-back preserves it.}\fi

Finally, we observe that a set-theoretical attempt to the 
proof of Theorems \ref{THM2} and \ref{THM3} in similarity to the quoted 
claims of Bourbaki is not enough powerful to imply these 
results. It is necessary to use \emph{\v{C}ech cohomolgy} of sheaves theory.

\if{
The major obstruction consists in the fact that in the 
proof of Theorem \ref{THM1}, by a simple 
translation, we may find an bijection 
$h_i^{-1}(x_i)\simto A_i$ which shows that the 
inverse system 
$(h_i^{-1}(x_i))_{i\in \mathbb{N}}$ is \emph{locally} 
isomorphic to $(A_i)_{i\in \mathbb{N}}$ on 
$X(\mathbb{N})$.\footnote{In other words, we have an 
isomorphism between the systems 
$(h_i^{-1}(x_i))_{i\in \mathbb{N}}$ and 
$(A_i)_{i\in \mathbb{N}}$ only when we 
restrict them to any subset of the form 
$\{0,1,\ldots, n\}\subset\mathbb{N}$.} 
This is enough to guarantee that a set-theoretic 
version of the Mittag-Leffler 
condition ii) of Theorem \ref{THM1} holds for 
$(h_i^{-1}(x_i))_i$. 
This ensures the non-vanishing of  
$h^{-1}(x)=\varprojlim_{i\in 
\mathbb{N}}h^{-1}_i(x_i)$. }\fi

Although certainly possible, 
an extension of these results to the context of 
inverse limit of \emph{non abelian} 
groups fits in the context of non abelian cohomology of 
sheaves and goes beyond the scopes of this paper. 
Indeed, since this is a result 
that is used by a wide range of mathematicians, 
we made the choice to maintain this paper 
as self contained and basic as possible.

\if{\comm{
lepagine web che mi hanno chiarito la cosa sono le seguenti (vedi TEX file):

%https://mathoverflow.net/questions/36466/do-we-have-non-abelian-sheaf-cohomology

%https://mathoverflow.net/questions/125200/coboundaries-and-gluing-in-cech-cohomology-intuition/125288#125288

}
}\fi

\if{
In down to earth terms, equality 
\eqref{eq: lim=0 general} follows from the 
exactness statement \eqref{eq: 04} and this last is 
proved, as well as Theorem \ref{THM1}, 
using a key 
set-theoretical statement on the non vanishing of 
the inverse system of \emph{sets} 
$(g_i^{-1}(x_i))_{i\in I}$ as mentioned in the 
comments after Theorem \ref{THM1}. 
Imitating Bourbaki, we prove such key statement more 
generally for uniform spaces which allows the 
generalization of Theorem \ref{THM2} to the context of 
topological (possibly non commutative) groups (cf. 
Corollary \ref{} \scomm{Completare}).

We also provide another generalization of Theorem 
\ref{THM1} based on another push-forward 
technique....\scomm{Completare}
}\fi

\subsubsection*{Acknowledgments.}
This work was partially done during a sabbatical term of 
the author at Imperial College of London from January to 
March 2020. The author wishes to thank Imperial College 
of London and UMI Abraham de Moivrefor for the 
hospitality, French CNRS and the ANR 
project ANR-15-IDEX-02 which all partially supported this 
work.

Moreover, the author wishes to thank Stephane Guillermou for useful discussions and helpful comments. 
Thanks also to Michel Brion and Jean-Pierre Demailly for 
having shown interest in this work.

\section{Notations}
We fix once for all a ring $R$ with unit element and 
denote by $\RMod$ the category of left $R$-modules. 
We denote by $\mathcal{S}$ the category of sets.
%By \emph{poset} we mean a partially ordered set. 
Let $\leq$ be an partial order relation on a set  $I$.  
For brevity, we use the terminology 
\emph{poset} for partially ordered set and we may 
indicate $(I,\leq)$ by $I$. For all  $i\in I$ we set 
\begin{eqnarray}
\Lambda(i)&:=&\{j\in I, j\leq i\}\;,\label{eq : Lambda(i)}\\
V(i)&:=&\{j\in I,j\geq i\}\label{eq : V(i)}\;,
\end{eqnarray}
and $D(i)=I- V(i)=\{j\in I, j\notin V(i)\}$. 

We say that the poset $I$ is 
\emph{directed} if for all $i,j\in I$ 
there exists $k\in I$ such that $k\geq i$ and $k\geq j$. 
If $I$ is directed, we say that a subset $S\subset I$ is 
\emph{cofinal} if for all $i\in I$ we can 
find $s\in S$ such that $s\geq i$ (notice that $S$ is 
possibly not directed).
For the moment, we do not assume $I$ directed, 
this condition will be specified when necessary. 

We say that a function 
$f:(I,\leq)\to(J,\leq)$ between two poset 
\emph{preserves the order relations} 
if for all $i, j\in I$ verifying 
$i\leq j$ we have $f(i)\leq f(j)$. 
We also say that $f$ is order preserving.

Everywhere in the paper ``\emph{countable}'' 
means \emph{at most countable} (i.e. finite or in 
bijection with the set of natural numbers $\mathbb{N}$).
\subsection{Inverse systems and inverse limits}

As it is usual, we interpret a poset 
$(I,\leq)$ as a category where 
the objects are the elements of $I$ and, for all $i,j\in I$, 
the set of morphisms $Hom(i,j)$ has exactly 
one element if 
$i\geq j$ or it is empty if either $j<i$ or  $i$ and $j$ are 
not comparable. 

We now introduce the notion of inverse system of sets or 
$R$-modules. An \emph{inverse system in 
of sets indexed on $(I,\leq)$} is a covariant 
functor from 
$(I,\leq)$ to $\mathcal{S}$. We denote by 
$\mathcal{S}^{I}$ the category of inverse 
systems in $\mathcal{S}$ indexed on $(I,\leq)$ (the morphisms being natural transformations of functors).
In down to earth terms, such an inverse system is 
equivalent to 
the datum of a collection $(S_i)_{i\in I}$ of sets 
indexed by $I$, together with a family of maps 
$(\rho^S_{i,j}:S_i\to S_j)_{(i,j)\in I^2, i\geq j}$ such 
that for all $i\in I$ the map $\rho_{i,i}^{S}$ is the 
identity map of $S_i$, and for all $i,j,k\in I$ such that 
$i\geq j\geq k$ one has 
$\rho^S_{j,k}\circ\rho^S_{i,j}=\rho^S_{i,k}$. 
If no confusion is possible, we indicate an inverse system 
as $(S_i)_{i\in I}$ or $S$ and the transition maps by 
$\rho_{i,j}$. 
A morphism $g:(S_i)_{i\in I}\to(T_i)_{i\in I}$ is a 
collection of maps $(g_i:S_i\to T_i)_{i\in I}$ such that for all 
$i,j\in I$ with $i\geq j$ one has 
$\rho_{i,j}^T\circ g_{i}=g_{j}\circ\rho_{i,j}^S$.

An \emph{inverse limit} of an inverse system 
$(S_i)_{i\in I}$ of sets is a set $\widehat{S}$
endowed with a family of maps 
$(\rho^{\widehat{S}}_i:\widehat{S}\to S_i)_{i\in I}$ satisfying 
for all  $i\leq j$ the compatibility relation 
$\rho^{S}_{i,j}\circ\rho^{\widehat{S}}_i=\rho^{\widehat{S}}_j$, which is \emph{universal} 
with respect to this data. In other words,
for all set $S'$ and all family of maps 
$(\rho^{S'}_i:S'\to S_i)_{i\in I}$ satisfying the same 
compatibility relation, 
there exists a \emph{unique} map $\pi:S'\to \widehat{S}$ 
such that for all $i$ one has 
$\rho^{\widehat{S}}_i\circ\pi=\rho^{S'}_i$. 
Therefore, the limit is unique up to a 
unique isomorphism, and we indicate it by 
$\widehat{S}=\varprojlim_{i\in I}(\rho_{i,j}^S:S_i\to S_j)$, or simply 
by $\varprojlim_{i\in I}S_i$ if no confusion is possible. 
The limit $\varprojlim_{i\in I}S_i$ identifies to 
the subset of $\prod_{i\in I}S_i$ formed by sequences 
$(x_i)_{i\in I}\in\prod_{i\in I}S_i$ satisfying for all 
$i\geq j$ the compatibility condition 
$\rho^S_{i,j}(x_i)=x_j$.

If every $S_i$ is an $R$-module and every 
$\rho_{i,j}^S$ is an $R$-module homomorphism, 
we say that $(S_i)_{i\in I}$ 
is a inverse system of $R$-modules. 
Morphisms between inverse systems of $R$-modules are 
morphisms $(g_i)_{i\in I}$ as above where, for all $i\in I$, $g_i$ is an 
$R$-module homomorphism.
We denote by $\Rmod^I$ the category of inverse 
systems of $R$-modules.
The category $\Rmod^I$ inherits almost all the 
properties of $\Rmod$. In particular, it is abelian and it 
has enough injective elements (cf. \cite[p.2]{Jensen}). 
The notion of exactness in $\Rmod^I$ has a particular 
interest for us. A sequence $(A_i)_i\xrightarrow{g}(B_i)_i\xrightarrow{h}(C_i)_i$ 
in $\Rmod^I$ can be seen as a collection of sequences $(A_i\xrightarrow{g_i}B_i\xrightarrow{h_i}C_i)_{i\in I}$ and it 
is \emph{exact} if for all $i\in I$  the sequence $A_i\xrightarrow{g_i}B_i\xrightarrow{h_i}C_i$ is exact in $\Rmod$.
A \emph{short exact sequence} in $(\RMod)^I$ 
is a collection of short exact sequences of $R$-modules
$( 0\to A_i\xrightarrow{g_i}B_i\xrightarrow{h_i}C_i \to 0)_{i\in I}$ indexed on $I$,
such that for every $i, j\in I$ with $i\geq j$ one has 
the compatibility relation $g_j\circ\rho^A_{i,j}=\rho^B_{i,j}\circ g_{i}$ and 
$h_j\circ\rho^B_{i,j}=\rho^C_{i,j}\circ h_{i}$. In other 
words it is an inverse system of exact sequences indexed 
on $(I,\leq)$. It is well known \cite[II.89, $\S6$, N.1, Proposition 1]{Bou-Alg} that such 
a system of short exact sequences gives rise to a left exact 
sequence of limits 
$0\to\varprojlim_{i\in I}A_i\xrightarrow{g}\varprojlim_{i\in I}B_i\xrightarrow{h}\varprojlim_{i\in I}C_i$
and the aim of this paper consists in providing a 
sufficient condition on $(A_i)_i$ ensuring the surjectivity 
of $h$.

\subsection{Sheaves and cohomology}
Let us now recall the notion of sheaf of sets or 
$R$-modules on a topological space.
Let $X$ be a topological space and let $\tau_X$ be the
family of all open subsets of $X$. Let us endow $\tau_X$ 
with the structure of category where the objects are 
open subsets of $X$ and the morphisms 
are just inclusion of them. A \emph{pre-sheaf of sets} 
%(resp. $R$-modules) 
$F$ on $X$ 
is a contravariant functor from $\tau_X$ to the category 
of sets. 
%(resp. $R$-modules). 
Equivalently, a pre-sheaf $F$ is a collection of sets 
%(resp. $R$-modules) 
$(F(U))_{U\in \tau_X}$ and of restriction maps 
%(resp. $R$-linear homomorphisms) 
$(\rho^F_{V,U}:F(V)\to F(U))_{U,V\in\tau_X,U\subset V}$ such that 
for all pair of inclusions 
$U\subset V\subset W$ of open subsets we 
have $\rho_{W,U}^F=\rho_{V,U}^F\circ\rho_{W,V}^F$. 
A \emph{morphism of pre-sheaves} $g:F\to G$ is a morphism of 
functors, i.e. a collection of maps 
$(g_U:F(U)\to G(U))_{U\in \tau_X}$ such that for all 
inclusion $U\subset V$ one has 
$g_U\circ\rho^F_{V,U}=\rho^G_{V,U}\circ g_V$. 
The elements of $F(U)$ are called \emph{sections} of $F$ over 
$U$. Another notation that is typically used to indicate $F(U)$ is $\Gamma(U,F)$.

We say that a pre-sheaf of sets $F$ is a \emph{sheaf} if 
for all open $U$ and all open covering 
$\{U_\alpha\}_{\alpha\in \aleph}$ 
of $U$ it satisfies moreover the following gluing 
properties: i) if $s,t\in F(U)$, and if for all $\alpha$ 
one has 
$\rho_{U,U_\alpha}^F(s)=\rho_{U,U_\alpha}^F(t)$, 
then $s=t$ in $F(U)$; ii)
if $(s_\alpha)_{\alpha\in\aleph}$ 
is a collection of sections $s_\alpha\in U_\alpha$ and if 
for all $\alpha,\beta\in\aleph$ one has 
$\rho^F_{U_\alpha,U_\alpha\cap U_\beta}
(s_\alpha)=\rho^F_{U_\beta,U_\alpha\cap U_\beta}
(s_\beta)$, then there exits a (unique) $s\in F(U)$ such 
that $\rho_{U,U_\alpha}^F(s)=s_\alpha$. 
In other words, if $(V_i)_{i\in I}$ is the covering of $U$ 
whose opens are all possible 
finite intersections of opens of 
$(U_\alpha)_{\alpha\in\aleph}$, 
then $(F(V_i))_{i\in I}$ is an inverse system of sets 
and conditions i) and ii) amount to say that  
$F(U)=\varprojlim_{i\in I}V_i$.
Morphisms of sheaves are just morphisms of 
pre-sheaves. We denote the 
category of sheaves on $X$ by $Sh(X)$. Let $x\in X$, the 
\emph{stalk} of a sheaf $F$  at $x$ is the direct limit 
$F_x:=\varinjlim_{x\in U}F(U)$.

If every $F(U)$ is and $R$-module and every restriction 
map $\rho_{U,V}^F$ is an $R$-linear homomorphism, 
we obtain a \emph{sheaf in $R$-modules}. Morphisms of 
sheaves of $R$-modules are morphisms 
$(g_U)_{U\in\tau_X}$ as above 
such that every $g_U$ is an 
homomorphism of $R$-modules. We denote the 
category of sheaves of $R$-modules on $X$ by 
$\Rmod(X)$. It is an abelian category with enough 
injective objects.  A morphism of sheaves of 
$R$-modules $h:F\to G$ induces, for all $x\in X$, a map 
on the stalk $h_x:F_x\to G_x$, and $h$ is a mono-morphism (resp. 
epimorphism) in $\Rmod(X)$ 
if so is $h_x$, for all $x\in X$. 
A sequence of sheaves of $R$-modules 
$F\to G\to H$ is \emph{exact} 
if for every $x\in X$ so is the 
sequence of stalks $F_x\to G_x \to H_x$. Typically, this 
does not implies the exactness of $F(U)\to S(U)\to H(U)$ 
for all open $U$. The functor 
$\Gamma(X,-):\Rmod(X)\to \RMod$  is left exact and its 
right satellites functors are called the sheaf 
cohomology groups of $F$ denoted by 
$H^n(X,F)=R^n\Gamma(X,F)$ 
(cf. \cite[$\S 4$]{Godement} for the definition). 
Here is a concrete way to compute them. When 
$H^n(X,A)=0$ for all $n\geq 1$, we say that $A$ is an 
\emph{acyclic sheaf} of $R$-module. Then, if 
$A^\bullet:0\to F\to A^0\to A^1\to A^2\to\cdots$ is 
an acyclic resolution (i.e. a long exact sequence of sheaves 
where every sheaf $A^k$ is acyclic), then 
$H^n(X,F)$ can be computed as  
the cohomology groups of the complex of $R$-modules 
$\Gamma(X,A^\bullet):0\to \Gamma(X,A^0)\to 
\Gamma(X,A^1)\to \cdots$. That is, if we set $A^{-1}=0$, 
then for every $n\geq 0$ 
the composite map $\Gamma(X,A^{n-1})\to 
\Gamma(X,A^n)\to \Gamma(X,A^{n+1})$ is zero, 
and if we call $B^n:=B^n(\Gamma(X,A^{\bullet}))\subseteq \Gamma(X,A^n)$ the image of the first map and
$Z^n:=Z^n(\Gamma(X,A^{\bullet}))\subseteq \Gamma(X,A^n)$ the kernel of the second map, then $B^n\subset Z^n$ and we have
\begin{equation}\label{eq: acyclic-1}
H^n(X,F)\;=\;Z^n/B^n\;.
\end{equation}
A standard and compact notation to indicate this process 
consist in writing 
\begin{equation}\label{eq : H^n=R^nGamma}
H^n(X,F)\;=\;R^n\Gamma(X,A^{\bullet})\;.
\end{equation}
%Some classical classes of acyclic sheaves are 
%analyzed in Section 
%\ref{Section : Some acyclicity results}.

\subsection{Topological space associated to a poset.}
\label{Section: Topological space associated to a poset.}
We now define a topological space $X(I)$ 
associated to a poset $(I,\leq)$. The points of $X(I)$ 
are the elements of $I$ 
and open subsets are the subsets 
$U\subseteq I$ with the property that for all $i\in U$ one 
has $\Lambda(i)\subseteq U$ (cf. \eqref{eq : Lambda(i)}). 
%This defines a topology $\tau_{(I,\leq)}$ on $X(I)$.
%A basis of open subsets for $\tau_{(I,\leq)}$ 
%is the family of subsets $(\Lambda(i))_{i\in I}$. 
%A subset $U\subseteq X(I)$ is open if and only if for all 
%$i\in U$ one has $\Lambda(i)\subseteq U$. 
In this topology arbitrary intersections of open subsets 
are open and therefore every subset $S$ of $X(I)$ 
admits a minimum open subset $O(S)=
\cup_{i\in S}\Lambda(i)$ containing it. 
In particular, $\Lambda(i)$ is the smallest open subset 
containing $i$. On the other hand, 
the closure of a subset $S\subset X(I)$ is given by 
$\overline{S}=\cup_{j\in S} V(j)$. 
%A covering 
If $(J,\leq)$ is another poset, then a map 
$f:X(I)\to X(J)$ is continuous if and only if $f$ preserves 
the order relations: if $i\leq j$, then $f(i)\leq f(j)$.
%We denote the category of sheaves in sets (resp. left 
%$R$-modules) over $X(I)$ by $Sh(X)$ (resp. 
%$\RMod(X)$).
The space $X(I)$ acquires special properties  when $I$ is 
a \emph{directed} poset and we will need the following 
Lemma 
\begin{lemma}\label{Lemma: S directed iff Shat is}
Let $S\subseteq I$ be a subset. Then, $S$ is a \emph{directed} 
poset with respect to the order relation induced by $I$ if, 
and only if, so is $O(S)$.%\hfill$\Box$
\end{lemma}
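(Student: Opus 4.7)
The plan is to unpack the definition $O(S)=\bigcup_{i\in S}\Lambda(i)=\{x\in I:\exists\, i\in S,\ x\leq i\}$ and argue each implication separately. The key observation is that $S\subseteq O(S)$, since every $i\in S$ lies in $\Lambda(i)$, and that by construction every element of $O(S)$ is dominated by some element of $S$. These two features make both implications essentially formal.

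For the forward direction, I would take $x,y\in O(S)$ and choose $i,j\in S$ with $x\leq i$ and $y\leq j$. Directedness of $S$ produces $k\in S$ with $k\geq i$ and $k\geq j$, hence $k\geq x$ and $k\geq y$; since $S\subseteq O(S)$, $k$ lies in $O(S)$ and is the required common upper bound. For the backward direction, take $i,j\in S\subseteq O(S)$. By directedness of $O(S)$, there exists $m\in O(S)$ with $m\geq i$ and $m\geq j$. By definition of $O(S)$ there is some $k\in S$ with $m\leq k$, so $k\geq m\geq i$ and $k\geq m\geq j$; thus $k\in S$ is a common upper bound in $S$.

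There is essentially no obstacle here: the only subtle point is to remember that $O(S)$ is built by going \emph{downward} from points of $S$, so every element of $O(S)$ is bounded above by an element of $S$. That is precisely what lets us transfer upper bounds between the two posets in either direction. No appeal to topology is actually required in the argument; the statement is a purely order-theoretic reformulation of the fact that an order ideal generated by a subset is directed if and only if its generating set is.
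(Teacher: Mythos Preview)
Your proof is correct and follows essentially the same approach as the paper's own proof: both directions use that every element of $O(S)$ lies below some element of $S$, and that $S\subseteq O(S)$, to transfer upper bounds in each direction. The only differences are cosmetic (choice of letters for the elements).
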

\begin{proof}
Assume that $S$ is directed. Then, for any pair 
$i,j\in O(S)=\cup_{s\in S}\Lambda(s)$ 
there exists $s_i,s_j\in S$ such that 
$i\in\Lambda(s_i)$ and $j\in\Lambda(s_j)$. Therefore, if 
$s\in S$ satisfies $s\geq s_i$ and $s\geq s_j$ we also 
have $s\geq i,j$ which shows that $O(S)$ is 
directed. On the other hand, assume now that 
$O(S)$ is directed. In particular, this implies that for all 
$i,j\in S$ there is $\widehat{s}\in O(S)$ such that 
$\widehat{s}\geq i,j$. Since 
$O(S)=\cup_{s\in S}\Lambda(s)$, 
there is $s\in S$ such that $s\geq\widehat{s}\geq i,j$ 
and the claim follows. 
\end{proof}
\subsection{Inverse systems indexed by $I$ and 
sheaves on $X(I)$.}\label{Section : inv syst and sh}
%Let $\mathcal{C}$ be a category with arbitrary inverse 
%limits and 

Let $(I,\leq)$ be a poset. In this section we recall the 
strong link between the notions of inverse 
systems indexed on $I$ and sheaves on $X(I)$. 
Let $S:=(\rho_{i,j}^S:S_i\to S_j)_{i,j\in I}$ be a inverse 
system of sets indexed on $I$. 
We can define a pre-sheaf $S$ on $X(I)$ 
by associating to every open subset $U$ 
of $X(I)$ the set
$\Gamma(U,S):=\varprojlim_{i\in U}S_i$, 
where $U$ has the order relation induced by $I$. 
For every 
inclusion of open subsets $V\subset U$ there is an 
obvious restriction map $\rho_{U,V}^S:\Gamma(U,S)\to 
\Gamma(V,S)$ provided by the universal property of the 
inverse limit. It is not hard 
to show that $S$ is a 
\emph{sheaf} of sets on $X(I)$ and that
every sheaf on $X(I)$ is of this type. The stalk of a sheaf 
$S$ at a point $i\in X(I)$ is $S(\Lambda(i))$ and it 
coincides with the value $S_i$ at $i$ of the associated 
inverse system.
In the sequel we do 
not distinguish sheaves on $X(I)$ from inverse systems 
and we will indicate them by the same symbol $S$, so 
that we write $S=(S_i)_{i\in I}$, $S(\Lambda(i))=S_i$, or $S(U)=\Gamma(U,S)$.
%However, when it is necessary, we call $\widetilde{S}$ 
%the sheaf associated to the inverse system $S$.
In this correspondence, the inverse limit of an inverse 
system $S=(S_i)_i$ corresponds to the global sections of the 
associated sheaf:
\begin{equation}
\Gamma(X(I),S)\;=\;\varprojlim_{i\in I}S_i\;.
\end{equation}
Moreover, if $(A_i)_i$ is an inverse 
system of $R$-modules, then the derived functors of 
$\varprojlim_{i\in I}^{(n)}A_i$ are defined as the sheaf 
cohomology groups $H^n(X(I),A)$ 
%(cf. \cite[$\S 4$]{Godement} for the definition)
\begin{equation}\label{eq: lim=Hn}
\varprojlim_{i\in I}{}^{(n)}A_i\;:=\;H^n(X(I),A)\;.
\end{equation}
\if{Concretely, if 
$I^\bullet:0\to A\to I^0\to I^1\to I^2\to\cdots$ is 
an acyclic resolution\footnote{A resolution of $A$ 
is just a long exact sequence where the first term is $A$. 
The word acyclic 
means that for all $k\geq 0$ the cohomology groups 
$H^{n}(X(I),I^k)$ are $0$ for every $n\geq 1$.} of $A$, then 
$H^n(X(I),A)$ 
are the cohomology groups of the complex 
$\Gamma(X(I),I^\bullet):0\to \Gamma(X(I),I^0)\to 
\Gamma(X(I),I^1)\to \cdots$, that is 
\begin{equation}\label{eq: acyclic-1}
H^n(X(I),A)\;=\;R^n(\Gamma(X(I),I^\bullet))\;.
\end{equation}}\fi

\subsection{Pull-back and push-forward operations}
Let $(I,\leq)$ and $(J,\leq)$ be two  poset.
Let $f:I\to J$ be a map preserving the order.
Usual pull-back $f^{-1}:Sh(X(J))\to Sh(X(I))$ 
and push-forward $f_*:Sh(X(I))\to Sh(X(J))$ functors 
exist because $f:X(I)\to X(J)$ is just a continuous map 
of topological spaces. We refer to \cite{Godement} 
for their definitions. We bound ourself to describe them 
in term of inverse systems.

\subsubsection{Push-forward.}
Let $S:=(\rho_{i,j}^S:S_i\to S_j)_{i,j\in I}$ be an 
inverse system of sets indexed by $I$ and let $k\in J$. 
By definition, 
for all open subset $U\subseteq X(J)$ the push-forward 
of $S$ is given by $f_*S(U)=S(f^{-1}(U))$ with evident 
transition maps $\rho_{U,V}^{f_*S}=
\rho_{f^{-1}(U),f^{-1}(V)}^{S}$ 
deduced by those of $S$. 
In particular the stalk at a point $k\in J$ is 
given by $(f_*S)_k=f_*S(\Lambda(k))=
\varprojlim_{j\in f^{-1}(\Lambda(k))}S_j$ with 
evident transition maps $\rho^{f_* S}_{k,t}$, 
$k\geq t\in J$, obtained by universal property of the 
limits. Of course, if $S$ is a sheaf in $R$-modules, so is 
$f_*S$.

\subsubsection{Pull-back.}
Let us come now to the pull-back. 
Let now $T=(\rho^T_{i,j}:T_i\to T_j)_{i,j\in J}$ 
be an inverse system of sets indexed indexed by $J$. 
In usual sheaf theory $f^{-1}$ is the sheaf 
associated to the pre-sheaf associating to every open 
$U\subseteq X(I)$ the set 
$\varinjlim_{f(U)\subset V}T(V)$. 
However, in our setting, arbitrary intersections of opens 
are opens, therefore 
$\varinjlim_{f(U)\subset V}T(V)=T(\;O(f(U))\;)$, 
where $O(f(U))=\cup_{i\in U}\Lambda(f(i))$. 
It is indeed easier to define $f^{-1}T$ as an inverse system indexed by $I$. Namely, for every 
$i\in I$, we have $(f^{-1}T)_i:=T_{f(i)}$ and for all 
$i,j\in I$, $i\geq j$, we have
$\rho_{i,j}^{f^{-1}T}:=\rho_{f(i),f(j)}^T$. 
The stalk of $f^{-1}T_i$ is then 
$T_{f(i)}$. Again, when $T$ is a sheaf of 
$R$-modules, so is $f^{-1}T$. 

If $I$ is a subset of $J$ with the order relation induced 
by $J$ and if $f:I\to J$ is the inclusion, we use the 
notation $T_{|I}:=f^{-1}T$.\footnote{Notice that, 
when using this notation, the partial order relation of 
$I$ has to be \emph{induced by that of $J$}. The reason 
is that the injectivity of $f$ is not enough to ensure good 
relations between $\Gamma(X(I),f^{-1}F)$ 
and $\Gamma(X(J),F)$.
 For example, 
assume that we have the set-theoretic equality 
$I=\{i_1,i_2\}=J$ but $i_1$ and $i_2$ are not 
comparable in $I$ while $i_1\leq i_2$ in $J$. Then the 
identity $\iota:I\to J$ preserves the order relation and it 
hence continuous, in this case we do not want to write
$F_{|I}=\iota^{-1}F$.}

%In the following Lemma the fact that the posets are 
%directed is crucial. 
\begin{lemma}\label{Lemma: cofinal pull-back}
Let $f:I\to J$ be a map of directed posets that preserves 
the order relations. Assume that 
the image $f(I)$ is a cofinal subset of $J$. Then 
$$\Gamma(X(J),-)\;\cong\;\Gamma(X(I),-)\circ f^{-1}.$$ 
In other words, for all inverse system 
$T:=(T_j)_{j\in J}$ 
the natural map $\varprojlim_{j\in J}T_j\to
\varprojlim_{i\in I}(f^{-1}T)_i$ is bijective.
\end{lemma}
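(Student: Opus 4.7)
The plan is to construct the inverse map explicitly, since both the natural map and its candidate inverse are very concrete at the level of compatible families. Recall that $\varprojlim_{j\in J}T_j$ identifies with the set of families $(x_j)_{j\in J}\in\prod_j T_j$ satisfying $\rho^T_{j,k}(x_j)=x_k$ whenever $j\geq k$, and similarly $\varprojlim_{i\in I}(f^{-1}T)_i$ identifies with families $(y_i)_{i\in I}\in\prod_i T_{f(i)}$ satisfying $\rho^T_{f(i),f(j)}(y_i)=y_j$ whenever $i\geq j$ in $I$. The natural map of the statement sends $(x_j)_{j\in J}$ to $(x_{f(i)})_{i\in I}$.

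For injectivity, given $j\in J$, by cofinality of $f(I)$ in $J$ there exists $i\in I$ with $f(i)\geq j$, hence $x_j=\rho^T_{f(i),j}(x_{f(i)})$ is completely determined by the restriction $(x_{f(i)})_{i\in I}$.

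For surjectivity, the plan is: given a compatible family $(y_i)_{i\in I}$ on the right, define $x_j:=\rho^T_{f(i),j}(y_i)$ for any $i\in I$ with $f(i)\geq j$. The main obstacle, and really the only nontrivial point, is well-definedness. If $i_1,i_2\in I$ both satisfy $f(i_1),f(i_2)\geq j$, I would use directedness of $I$ to pick $i_3\geq i_1,i_2$; since $f$ preserves order, $f(i_3)\geq f(i_1),f(i_2)\geq j$, and the compatibility of $(y_i)$ gives $y_{i_a}=\rho^T_{f(i_3),f(i_a)}(y_{i_3})$ for $a=1,2$. Applying $\rho^T_{f(i_a),j}$ and using the cocycle relation $\rho^T_{f(i_a),j}\circ\rho^T_{f(i_3),f(i_a)}=\rho^T_{f(i_3),j}$, both candidate values for $x_j$ equal $\rho^T_{f(i_3),j}(y_{i_3})$.

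Finally, I would verify the two remaining properties by the same directedness trick: for $j\geq k$ in $J$, pick $i$ with $f(i)\geq j\geq k$ and compute $\rho^T_{j,k}(x_j)=\rho^T_{j,k}\rho^T_{f(i),j}(y_i)=\rho^T_{f(i),k}(y_i)=x_k$, so $(x_j)_{j\in J}\in\varprojlim_{j\in J}T_j$; and for $x_{f(i_0)}$ one may take $i=i_0$ itself, giving $x_{f(i_0)}=\rho^T_{f(i_0),f(i_0)}(y_{i_0})=y_{i_0}$, so the construction is indeed a section of the natural map. Since the map is an $R$-linear morphism when $T$ is a sheaf of $R$-modules and the inverse is visibly $R$-linear as well, the same proof covers the $R$-module case. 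Thus the only real content is the well-definedness argument, which crucially uses both hypotheses: cofinality of $f(I)$ in $J$ to produce dominating elements, and directedness of $I$ to reconcile competing choices.
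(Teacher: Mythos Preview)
Your proof is correct and follows essentially the same approach as the paper: both establish injectivity by showing each $x_j$ is determined via restriction from some $x_{f(i)}$ with $f(i)\geq j$, and both prove surjectivity by defining a preimage and using directedness of $I$ to check well-definedness. The only cosmetic difference is that the paper first factors $f$ as a surjection onto $f(I)$ followed by the cofinal inclusion $f(I)\hookrightarrow J$ (citing Bourbaki for the latter), whereas you handle both steps at once by working directly with indices $i$ satisfying $f(i)\geq j$ rather than $f(i)=j$; your version is slightly more self-contained.
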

\begin{proof}
\if{
A straightforward verification with 
compatible sequences gives the result. Notice that the 
fact that the posets are directed is a crucial assumption
for the surjectivity of $\varprojlim_{j\in J}T_j\to
\varprojlim_{i\in I}(f^{-1}T)_i$.
}\fi

%\if{....

The image $J':=f(I)$ is a directed poset 
which is a cofinal subset of $J$. 
We may split $f$ as $f=f_1\circ f_2$, where $f_2:I\to J'$ 
is a surjective map and $f_1:J'\hookrightarrow J$ is an 
inclusion of posets. 
By \cite[III.55, $\S7$, N.2, Prop.3]{Bou-ENS}, we have 
$\Gamma(X(J'),-)\circ f_1^{-1}=\Gamma(X(J),-)$, 
therefore we can assume 
$J=J'$ and $f=f_2$ surjective. 
Let $F=(F_j)_{j\in J}$ be an inverse system of 
$R$-modules indexed by $J$. Then, by definition of 
$f^{-1}$, for all $i\in I$ we have an equality 
$(f^{-1}F)_i= F_{f(i)}$ and the 
natural map $\phi:\varprojlim_{j\in J}F_j\to 
\varprojlim_{i\in I}(f^{-1}F)_i$ 
associates to a compatible 
sequence $x=(x_j)_{j\in J}$ the sequence 
$(x_{f(i)})_{i\in I}$ which is compatible by construction. 
If $\phi(x)=0$, then $x_{f(i)}=0$ for all $i\in I$ and 
the surjectivity of $f$ implies that $x_{j}=0$ for all 
$j\in J$. That is $x=0$ and $\phi$ is injective. 
On the other hand, let us consider $y=(y_i)_{i\in I}\in 
\varprojlim_{i\in I}(f^{-1}F)_i$. For all $j\in J$ the 
inverse image $f^{-1}(j)$ is not empty, and if 
$i_1,i_2\in f^{-1}(j)$, then $y_{i_1}=y_{i_2}$. 
Indeed, since $I$ is directed, there is $i_3\geq i_1,i_2$ 
and for $k=1,2$ we have 
$y_k=\rho^{f^{-1}F}_{i_3,i_k}(y_{i_3})=
\rho^{F}_{f(i_3),j}(y_{i_3})$. Therefore, for all $j\in J$ 
we can chose $i\in f^{-1}(j)$ and set $x_j:=y_{i}$. 
This is independent on the choice of $i$ in $f^{-1}(j)$. 
The sequence $x:=(x_j)_{j\in J}$ is visibly compatible and $\phi(x)=y$.
%}\fi
\end{proof}

\if{
\begin{remark}
In standard sheaf theory, there are functors 
$j_!$ and $j^!$ 
\end{remark}
}\fi
\if{
....

....

\comm{Spostare dopo il Lemma...}
In case $f$ is the inclusion of a directed cofinal subset, 
we have the following well-known result.
\begin{theorem}[\protect{\cite[Theorem B]{Mitchell}}]
Assume that $(J,\leq)$ is a directed poset and 
$I\subseteq J$ a cofinal subset which is directed with 
respect to the order relation induced by that of $J$. 
Denote by $i:I\hookrightarrow J$ the 
inclusion. Then for every inverse system of $R$-modules 
$(A_j)_{j\in J}$, and every integer $n\geq 0$, one has 
\begin{equation}
\varprojlim_{i\in J}{}^{(n)}A_{i}
\;=\;
\varprojlim_{i\in I}{}^{(n)}A_{i}\;.
\end{equation}
In other words $H^n(X(J),A)=H^n(X(I),i^{-1}A)$.
\end{theorem}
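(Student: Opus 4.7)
The plan is to factor $f$ as $f = f_1 \circ f_2$, where $f_2 \colon I \twoheadrightarrow J' := f(I)$ is surjective and $f_1 \colon J' \hookrightarrow J$ is the inclusion. One first observes that $J'$ is itself a directed poset: given $j_1 = f(i_1)$ and $j_2 = f(i_2)$ in $J'$, by directedness of $I$ there is some $i_3 \in I$ with $i_3 \geq i_1, i_2$, and order-preservation gives $f(i_3) \geq j_1, j_2$ with $f(i_3) \in J'$. Since by hypothesis $J'$ is cofinal in $J$, it suffices to treat two separate cases: (a) the inclusion of a directed cofinal subposet, and (b) an order-preserving surjection of directed posets.

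Case (a) is the classical Bourbaki result on cofinal subsystems \cite[III.55, \S7, N.2, Prop.3]{Bou-ENS}: for a directed cofinal inclusion $f_1 \colon J' \hookrightarrow J$ one has $\Gamma(X(J),-) = \Gamma(X(J'),-) \circ f_1^{-1}$. This instantly reduces the problem to case (b), where the interesting content lies.

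For case (b), I would verify the bijectivity of the natural map $\phi \colon \varprojlim_{j\in J} F_j \to \varprojlim_{i\in I} (f^{-1}F)_i$, which sends a compatible sequence $(x_j)_{j\in J}$ to $(x_{f(i)})_{i\in I}$, directly on compatible sequences. Injectivity is immediate: if $\phi(x) = 0$ then $x_{f(i)} = 0$ for every $i \in I$, and surjectivity of $f$ forces $x_j = 0$ for all $j \in J$. For surjectivity, given $y = (y_i)_{i\in I}$, the natural candidate is $x_j := y_i$ for any choice of $i \in f^{-1}(j)$. The main obstacle lies in showing well-definedness: if $i_1, i_2 \in f^{-1}(j)$, pick $i_3 \in I$ with $i_3 \geq i_1, i_2$ (directedness of $I$), and then for $k=1,2$,
$$y_{i_k} = \rho^{f^{-1}F}_{i_3, i_k}(y_{i_3}) = \rho^{F}_{f(i_3), j}(y_{i_3}),$$
so both candidate values coincide. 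Compatibility of $(x_j)_{j\in J}$ and the identity $\phi(x) = y$ then follow by routine verification using order-preservation and surjectivity of $f$. This argument pinpoints where directedness of $I$ is indispensable: without it, two preimages of the same $j$ cannot be reconciled, and the lemma genuinely fails.
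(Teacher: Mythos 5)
Your argument is correct as far as it goes, but it only proves the case $n=0$: what you have established is precisely Lemma \ref{Lemma: cofinal pull-back} of the paper (and your proof of it --- factoring $f$ through its image, invoking Bourbaki for the cofinal inclusion, and checking bijectivity on compatible sequences using directedness of $I$ --- is essentially identical to the paper's). The theorem, however, asserts an isomorphism $\varprojlim_{J}^{(n)}A\cong\varprojlim_{I}^{(n)}(i^{-1}A)$ for \emph{every} $n\geq 0$, and nothing in your proposal addresses $n\geq 1$.

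The gap is not cosmetic. Knowing that the two left exact functors $\Gamma(X(J),-)$ and $\Gamma(X(I),-)\circ i^{-1}$ coincide does not identify their right derived functors: to compute $H^n(X(I),i^{-1}A)$ from an injective (or flabby) resolution $0\to A\to G^\bullet$ on $X(J)$, one must know that $i^{-1}G^\bullet$ is still an \emph{acyclic} resolution on $X(I)$, and in general the pull-back functor preserves neither injectivity nor flabbiness nor any kind of acyclicity --- the paper explicitly warns about this. This is exactly the point where the paper has to work: it takes the Godement resolution of $A$ by products of skyscraper sheaves, shows that the pull-back of such sheaves along a map of \emph{directed} posets is weakly flabby (Proposition \ref{Proposition: godement pull-back}), and then invokes Jensen's theorem (Theorem \ref{THM: weak flabby acyclic}) that weakly flabby sheaves over a directed poset are acyclic; only then does your $n=0$ computation, applied termwise to the resolution, yield the equality of all cohomology groups (this is the proof of Theorem \ref{Thm: pull-back preserve coh}). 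To complete your proof you would need to supply this acyclicity-preservation step, or some substitute for it (e.g.\ a $\delta$-functor or spectral-sequence argument showing $i^{-1}$ sends injectives to $\Gamma(X(I),-)$-acyclic sheaves), and your remark that directedness is ``indispensable'' for reconciling preimages is in fact needed twice: once for your well-definedness argument, and once, more seriously, to make Jensen's acyclicity criterion available.
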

}\fi

\subsubsection{Usual properties of $f^{-1}$ and $f_*$.}
\label{Usual properties}
By the above descriptions, it is not hard to see that the 
functor $f^{-1}:\RMod(X(J))\to\RMod(X(I))$ is 
\emph{exact} and $f_*:\RMod(X(I))\to\RMod(X(J))$ is 
 \emph{left exact}. On the other hand, it is well 
known that $f^{-1}$ is \emph{left adjoint} to $f_*$, i.e. 
for all pair of sheaves $S\in \Rmod(X(I))$ and
$T\in \Rmod(X(J))$ there is a canonical functorial 
isomorphism $\Hom_{\Rmod(X(I))}(f^{-1}T,S)\simto
\Hom_{\Rmod(X(J))}(T,f_*S)$. 
Moreover, we have canonical unit and counit 
morphisms $T\to f_*f^{-1}T$ and $f^{-1}f_*S\to S$ 
respectively. 
In general, if $(F,G)$ is a pair of adjoint functors such 
that $F$ is exact and left adjoint to $G$, 
then $G$ sends injective into injective. In 
particular, this is the case of $f_*$ which preserves 
injective objects. It is not hard to see that $f_*$ also 
preserve flabbiness (cf. Section \ref{Section : flabby}).

A typical application of this fact
is the following interpretation of the cohomology groups 
$H^n(X(I),-)$.
Let us denote by $\bullet$ the poset with an individual 
element. The category of sheafs in sets 
(resp. $R$-modules) 
over $X(\bullet)$ is identified with the category of sets  
(resp. $R$-modules) it self by the global functor 
$\Gamma(X(\bullet),-):\RMod(X(\bullet))\simto\RMod$.
The poset $\bullet$ is the final object of the category of 
posets and we denote by
$\pi_I:X(I)\to X(\bullet)$ the projection. Then 
%for every sheaf $S$ in $R$-modules over $X(I)$ 
one has an 
equality of functors 
$\Gamma(X(I),-)=\Gamma(X(\bullet),-)\circ (\pi_I)_*$. By 
the above identification, usually we drop the notation 
$\Gamma(X(\bullet),-)$ and we simply write 
\begin{equation}
\Gamma(X(I),-)\;=\;\pi_{I,*}\;.%\;=\;\varprojlim_{i\in I}
\end{equation}
If $F$ is a sheaf in $R$-modules over $X(I)$ 
we can translate \eqref{eq : H^n=R^nGamma} into the notation
\begin{equation}\label{eq: RnpiI}
H^n(X(I),F)\;=\;R^n\pi_{I,*}(F)\;,
\end{equation}
where $R\pi_{I,*}$ denotes the derived functor of 
$\pi_{I,*}$.

Unfortunately, in general 
$f^{-1}$ does not preserve injectives nor 
any kind of acyclicity and 
for this reason it does not behave well for the 
computation of the cohomology of sheaves. 
Similarly, $f_*$ is not exact and this makes difficult its 
use in the computation of the cohomology because some 
spectral sequences are needed.
However, we provide in 
the next sections some interesting situations where  
\emph{$f^{-1}$ and $f_*$ preserve the cohomology groups}.

\section{Some acyclicity results}
\label{Section : Some acyclicity results}
Let $(I,\leq)$ a poset. In this section we introduce 
several types of acyclic sheaves 
that can be used to compute 
the derived functor of the inverse limit by means of  
\eqref{eq : H^n=R^nGamma}, \eqref{eq: lim=Hn} and 
\eqref{eq: RnpiI}.

\subsection{Flabby and skyscraper sheaves}\label{Section : flabby}
A sheaf $F$ of $R$-modules on $X(I)$ 
is \emph{flabby} if for every open subset 
$U\subseteq X(I)$ the restriction $F(X(I))\to F(U)$ is 
set theoretically surjective. Flabby sheaves are acyclic  
(cf. \cite[Théorème 4.7.1]{Godement}). 
It follows  from the definition that if 
$f:X(I)\to X(J)$ is any continuous map, and if $F$ is a 
flabby sheaf on $X(I)$, then its push-forward $f_*F$  
is flabby. 
This is a simple way to construct acyclic sheaves.

\label{Section : Skyscraper sheaves}
In particular, assume that $I=\bullet$ is a point and  
consider the map $\sigma_j:X(\bullet)\to X(J)$ whose 
image is a point $j\in J$, then for all 
$R$-module $A\in\Rmod=\Rmod(X(\bullet))$, the 
push-forward $\sigma_{j,*}(A)$ is flabby. 
The sheaf $\sigma_{j,*}(A)$ is called the 
\emph{skyscraper} 
sheaf at $j$ with value $A$. It easily seen that for 
$k\in J$ we have $\sigma_{j,*}(A)_k=A$, if 
$k\in V(j)$, and $\sigma_{j,*}(A)_k=0$ otherwise, and 
the transition maps $\rho_{k,t}^{\sigma_{j,*}(A)}$ are 
either the identity maps if $k\geq t\in V(j)$,  
or they equals $0$ otherwise. Skyscraper sheaves are 
acyclic because $\sigma_{j,*}$ preserves flabbiness.

\subsection{Godement resolution}\label{Section : Godement}
We now use skyscraper sheaves to define an acyclic 
resolution of every sheaf of $F$ of $R$-modules over 
$X(J)$ called the \emph{Godement resolution of $F$}. 
We maintain the notation of Section \ref{Section : Skyscraper sheaves}. By adjunction, for all $j\in J$, 
we have a canonical morphism
$F\to \sigma_{j,*}\sigma_j^{-1}F$. 
Therefore, we have a morphism 
$\sigma^F:F\to 
\prod_{j\in J}\sigma_{j,*}\sigma_j^{-1}F$. 
Let us call $Gode(F)$ this product of sheaves. 
Then $Gode(F)$ is flabby because 
skyscraper sheaves are flabby
and a product of flabby sheaves is flabby.

One sees that the sheaf
$G=Gode(F)$ is associated with the 
inverse system $(G_j)_{j\in J}$ defined as 
$G_j:=\prod_{k\in\Lambda(j)}F_k$ with 
restriction maps $\rho_{j,t}^G$ given by 
canonical projections between products. 

The map $\sigma^F:F\to Gode(F)$ is a mono-morphism, 
indeed for every $j\in J$ its stalk at $j$ is the map 
$\sigma_j^F:F_j\to G_j=\prod_{k\in\Lambda(j)}F_k$, that is 
the product $\sigma^F_j=\prod_{k\leq j}\rho_{j,k}^F$. The injectivity then follows from the fact that 
$\rho_{j,j}^F$ is the identity. 
Now, we may consider the quotient 
$Gode(F)/F$ and include it into its $Gode(Gode(F)/\sigma^F(F))$ 
and repeating inductively this operation we obtain a 
flabby resolution $0\to F\to G^0\to G^1\to\cdots$ of 
$F$ which is called the 
\emph{Godement resolution} of $F$. 

\subsection{Directed posets and weak flabbiness}
Flabbiness is not really a 
common property because, 
for instance, if we have two disjoint open 
subsets $U$ and $V$ of $X(I)$, then 
$F(U\cup V)=F(U)\times F(V)$  and the surjectivity of 
$F(X(I))\to F(U)\times F(V)$ tells us that 
\emph{any arbitrary  
pair of sections over $U$ and $V$ have to 
glue to a global section over $X(I)$}. In particular, a 
constant sheaf is possibly not flabby (cf. Section \ref{Section:Acyclicity of constant sheaves}).
This problem related 
to connectedness is avoided with the introduction of 
a weaker notion, due to C.U. Jensen, called 
weak flabbiness in the context of \emph{directed} posets 
which is satisfied by a larger class of sheaves over $X(I)$ 
and is more suitable for our purposes. 
\begin{definition} \label{Def : weak-flabbines}
Let $(I,\leq)$ be a poset. We say that a 
sheaf of $R$-modules $F$ is \emph{weakly flabby} if for every  
open and \emph{directed} subset $J\subseteq I$ the restriction 
$F(X(I))\to F(X(J))$ is surjective. 
\end{definition}
This definition is important when $I$ is a 
\emph{directed} poset because of the following Theorem
\begin{theorem}[\protect{\cite[Théorème 1.8, p.9]{Jensen}}] \label{THM: weak flabby acyclic}
Assume that $(I,\leq)$ is a directed poset. 
Then any weakly flabby sheaf on $X(I)$ is acyclic.
\hfill$\Box$
\end{theorem}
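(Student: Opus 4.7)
The plan is to adapt Godement's classical proof that flabby sheaves are acyclic, exploiting the fact that when $I$ is directed, $X(I)$ is itself an open directed subset of itself, so weak flabbiness gives honest information about global sections. Starting from the Godement resolution $0 \to F \to G^0 \to G^1 \to \cdots$ of $F$ by flabby (hence weakly flabby) sheaves constructed in Section \ref{Section : Godement}, I would write $K^0 = F$ and let $K^n \subseteq G^n$ be the kernel of $G^n \to G^{n+1}$, producing short exact sequences $0 \to K^n \to G^n \to K^{n+1} \to 0$. If each $K^n$ is shown to be weakly flabby, then applying $\Gamma(X(I),-)$ to the resolution yields a complex that is still exact in positive degrees, and the standard long exact sequence argument gives $H^n(X(I),F)=0$ for all $n \geq 1$.

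The key intermediate step is the following stability lemma: if $0\to F'\to F\to F''\to 0$ is short exact on $X(I)$ with $F'$ weakly flabby, then for every open directed $J \subseteq I$ the induced sequence $0 \to F'(X(J)) \to F(X(J)) \to F''(X(J)) \to 0$ is exact. A short diagram chase then shows that whenever $F'$ and $F$ are both weakly flabby, so is $F''$; inductively every $K^n$ is weakly flabby, as required.

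The main obstacle is the stability lemma, which I would handle by Zorn's lemma. Fix an open directed $J \subseteq I$ and a section $\overline{s} \in F''(X(J))$, and consider the poset of pairs $(V,t)$ where $V$ is a non-empty open directed subset of $X(J)$ and $t \in F(V)$ lifts $\overline{s}|_V$, ordered by extension. An ascending chain has union $V_\infty$ that is open and remains directed since ascending unions of directed subsets are directed; the sheaf property glues the $t$'s to give an upper bound in $F(V_\infty)$. For a maximal element $(V_0,t_0)$, if $V_0 \neq X(J)$ one picks $j \in X(J)\setminus V_0$, uses the stalkwise surjectivity of $F\to F''$ to obtain a local lift $t_j \in F(\Lambda(j))$ of $\overline{s}|_{\Lambda(j)}$, and enlarges $V_0$ to an open directed subset $W \subseteq X(J)$ containing both $V_0$ and $\Lambda(j)$; the existence of such $W$ uses directedness of $J$ together with Lemma \ref{Lemma: S directed iff Shat is} to pass from a directed set of elements to its open hull. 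On $V_0 \cap \Lambda(j)$ the difference $t_0 - t_j$ is a section of $F'$, and weak flabbiness of $F'$ applied to the directed open $W$ lets one extend it to $F'(W)$; adjusting $t_j$ accordingly then produces a lift on $W$, contradicting maximality. It is precisely at this enlargement step that the directedness hypothesis on $I$, and the decision to test flabbiness only against directed opens, enters in an essential way.
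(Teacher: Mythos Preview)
The paper does not supply its own proof of this statement; it is quoted from Jensen with the end-of-proof box in place of an argument. So your proposal has to be judged on its own merits, and the overall architecture---Godement resolution, then a stability lemma saying that a short exact sequence with weakly flabby kernel stays exact on sections over every directed open, then inductive weak flabbiness of the successive kernels---is exactly the right one and is in fact Jensen's strategy.

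The gap is in the Zorn step of the stability lemma. You take a maximal pair $(V_0,t_0)$ with $V_0$ open and directed, pick $j\in X(J)\setminus V_0$, and want to adjust the local lift $t_j$ on $\Lambda(j)$ using weak flabbiness of $F'$. Two things go wrong. First, to invoke weak flabbiness of $F'$ you must extend the section $t_0-t_j$ \emph{from} $V_0\cap\Lambda(j)$; this requires $V_0\cap\Lambda(j)$ to be directed, and it need not be. A small example: let $I$ have elements $a,b,c,j,\top$ with $a,b\le c$, $a,b\le j$, $c,j\le\top$, and $a,b$ incomparable, $c,j$ incomparable; take $V_0=\Lambda(c)=\{a,b,c\}$. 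Then $V_0\cap\Lambda(j)=\{a,b\}$ is not directed. Second, even when the extension step succeeds, the glued lift lives on $V_0\cup\Lambda(j)$, and your Zorn poset consists of \emph{directed} opens; but $V_0\cup\Lambda(j)$ is not directed in general. For instance with $I=\mathbb N\times\mathbb N$, $V_0=\{(m,0):m\ge0\}$, $j=(0,1)$, the union $V_0\cup\Lambda(j)$ contains $(1,0)$ and $(0,1)$ but nothing above both. Your appeal to Lemma~\ref{Lemma: S directed iff Shat is} does not resolve this: it only says $O(S)$ is directed when $S$ already is.

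The repair is to choose the enlargement more carefully. Having fixed $j_0\notin V_0$, one works instead with the set
\[
V_1\;=\;V_0\;\cup\;\{\,k\in J:\ k\ge j_0\ \text{and}\ k\ge v\ \text{for some}\ v\in V_0\,\},
\]
which one checks directly \emph{is} directed; moreover $V_1\setminus V_0$ is itself directed and cofinal in $V_1$, so $\varprojlim_{V_1}=\varprojlim_{V_1\setminus V_0}$. One then lifts over the directed set $V_1\setminus V_0$ and uses weak flabbiness of $F'$ (in the form of Remark~\ref{Remark: every directed wf}, which allows arbitrary directed subsets, not only open ones) to correct the lift so that it matches $t_0$ on $V_0$. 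This is where the directedness of $I$ genuinely does the work, and it is more delicate than the one-point enlargement your sketch attempts.
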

\begin{remark}\label{Remark: every directed wf}
Let $I$ be a directed poset. It was proved by C.U.Jensen 
that, if $F$ is regarded as an inverse system, 
then $F$ is weakly flabby on 
$I$ if, and only if, for any subset 
$J\subseteq I$ which is directed with respect to the 
partial order induced by $I$,
the restriction $F(X(I))=\varprojlim_{i\in I}F_i\to 
\varprojlim_{j\in J}F_j$ is surjective
(cf. 
\cite[Lemme 1.3, p.6]{Jensen}). That is, the open 
condition in Definition \ref{Def : weak-flabbines} can be 
relaxed if needed.
\if{
With an abuse, let us denote by $F(X(J)):=\varprojlim_{j\in J}F_j$.
In particular, 
if $F$ is weakly flabby on $I$, 
then for every $I''\subseteq I'\subseteq I$, where $I',I''$ 
are subsets that are directed with respect to the partial 
order relation induced by $I$, 
the restriction map $F(I')\to F(I'')$ is 
surjective.
}\fi
\end{remark}

\subsection{Acyclicity of constant sheaves over directed 
posets}\label{Section:Acyclicity of constant sheaves}
Another class of interesting sheaves of $R$-modules on 
$X(I)$ is given by constant sheaves. If $C\in\Rmod$ 
is an $R$-module, and if $\pi_I:X(I)\to X(\bullet)$ is the 
constant function 
considered in section \ref{Usual properties}, 
then the \emph{constant sheaf on $X(I)$ with value 
$C\in\Rmod$} 
is defined as $\pi_{I}^{-1}(C)$. For general $X(I)$, 
constant sheaves are not flabby nor 
acyclic and their cohmology groups contain 
important information about the topological space 
$X(I)$. However, if $I$ is a 
\emph{directed} poset, the following corollary 
shows that they are acyclic.
\begin{proposition}\label{Prop: Constant is acyclic}
If $I$ is a directed poset, then any constant sheaf over 
$X(I)$ is weakly flabby, hence acyclic by Theorem 
\ref{THM: weak flabby acyclic}.
\end{proposition}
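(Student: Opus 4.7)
The plan is to show directly that the constant sheaf $F = \pi_I^{-1}(C)$ satisfies the criterion of Remark \ref{Remark: every directed wf}, which then lets us invoke Theorem \ref{THM: weak flabby acyclic}.

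First I would unpack the inverse-system description of $F$. By the formula for the pull-back from Section 3.5.2, for every $i\in I$ we have $F_i = C_{\pi_I(i)} = C$, and for every $i\geq j$ in $I$ the transition map $\rho_{i,j}^F$ is the identity of $C$ (since it equals $\rho_{\pi_I(i),\pi_I(j)}^C$ and $\pi_I$ sends everything to the unique point). So $F$ is the constant inverse system with value $C$ and identity transitions.

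Next I would compute $F(X(J)) = \varprojlim_{j\in J}F_j$ for any nonempty subset $J\subseteq I$ directed for the induced order. A compatible sequence $(x_j)_{j\in J}$ satisfies $x_k = \rho_{j,k}^F(x_j)=x_j$ whenever $j\geq k$; directedness of $J$ implies every pair $j,k$ has a common upper bound $\ell$ in $J$, so $x_j=x_\ell=x_k$. Hence the map sending $(x_j)_j$ to its common value identifies $\varprojlim_{j\in J} F_j$ with $C$. The same reasoning applies to $J=I$ itself (which is directed by hypothesis), giving $F(X(I))=C$, and under these identifications the natural restriction morphism $F(X(I))\to F(X(J))$ is the identity of $C$, in particular surjective.

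By Remark \ref{Remark: every directed wf}, surjectivity of $F(X(I))\to \varprojlim_{j\in J}F_j$ for every directed subset $J\subseteq I$ is equivalent to $F$ being weakly flabby; so $F$ is weakly flabby. Since $I$ is a directed poset, Theorem \ref{THM: weak flabby acyclic} then yields the acyclicity of $F$. There is no real obstacle here; the only point worth handling carefully is that the directed subsets appearing in the weak-flabbiness criterion need not be open, which is exactly what Remark \ref{Remark: every directed wf} allows us to bypass.
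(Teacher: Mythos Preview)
Your proof is correct and follows essentially the same approach as the paper: both identify the constant inverse system, compute $\varprojlim_{j\in J}F_j=C$ for every directed $J\subseteq I$, and conclude weak flabbiness (hence acyclicity via Theorem~\ref{THM: weak flabby acyclic}). Your version is slightly more explicit in invoking Remark~\ref{Remark: every directed wf} to handle non-open directed subsets, but this is a cosmetic difference rather than a genuine departure.
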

\begin{proof}
By definition, the inverse system $(C_i)_i=\pi_{I}^{-1}(C)$ satisfies $C_i=C$ for 
all $i\in I$ and the transition maps 
$\rho_{i,j}^{\pi_{I}^{-1}(C)}$ are the identities. Since 
$I$ is directed, we have $\varprojlim_{i\in I}C_i=C$ and 
the same holds for any directed poset $J\subset I$. 
The 
claim then follows from 
Jensen Theorem \ref{THM: weak flabby acyclic}.
\end{proof}
\subsection{Inverse image and weakly flabbiness}
\label{Section:Inverse image and weakly flabbiness}
In a general topological space the inverse image functor 
does not preserve flabbiness. 
However, in our context, weak flabbiness is preserved when we have directed posets.
\begin{proposition}
\label{Prop: inverse image of weakly flabby}
Let $f:I\to J$ be a map of directed posets that preserves 
the order relations. If $W$ is a weakly flabby sheaf on 
$J$, then $f^{-1}W$ is weakly flabby.
\end{proposition}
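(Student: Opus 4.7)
The plan is to reduce weak flabbiness of $f^{-1}W$ on $I$ to weak flabbiness of $W$ on $J$ by transporting directed subsets of $I$ to directed subsets of $J$ via $f$, and then invoking Lemma \ref{Lemma: cofinal pull-back} to compare inverse limits. By Remark \ref{Remark: every directed wf}, it suffices to show that for every subset $K\subseteq I$ which is directed with respect to the order induced from $I$, the restriction
\[
\varprojlim_{i\in I}(f^{-1}W)_i\;\longrightarrow\;\varprojlim_{k\in K}(f^{-1}W)_k
\]
is surjective (noting that $(f^{-1}W)_i=W_{f(i)}$ by the definition of $f^{-1}$).

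First, I would verify that $f(I)$ and $f(K)$ are both directed subsets of $J$ with respect to the induced order. Since $f$ preserves the order and $I$ (resp.\ $K$) is directed, any two elements $f(i_1),f(i_2)$ admit an upper bound $f(i_3)\in f(I)$ with $i_3\geq i_1,i_2$; the same argument works for $K$. Note moreover that $f(K)\subseteq f(I)\subseteq J$.

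Next, apply Lemma \ref{Lemma: cofinal pull-back} to the corestriction $f\colon I\to f(I)$, which is surjective (hence has cofinal image) and preserves the order; both source and target are directed. This yields a canonical bijection $\varprojlim_{i\in I}W_{f(i)}\simto\varprojlim_{j\in f(I)}W_j$. The same lemma applied to $f\colon K\to f(K)$ gives $\varprojlim_{k\in K}W_{f(k)}\simto\varprojlim_{j\in f(K)}W_j$. Under these identifications the restriction we want to show surjective fits into a commutative diagram with $\varprojlim_{j\in J}W_j$ at the top:
\[
\xymatrix{
& \varprojlim_{j\in J}W_j\ar[dl]\ar[dr] &\\
\varprojlim_{j\in f(I)}W_j\ar[rr] && \varprojlim_{j\in f(K)}W_j\;.
}
\]
Since $W$ is weakly flabby on $J$ and both $f(I)$ and $f(K)$ are directed subsets of $J$, Remark \ref{Remark: every directed wf} ensures that both diagonal arrows are surjective. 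Because the composition along the right diagonal factors through the bottom arrow, the bottom arrow is surjective as well, concluding the proof.

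The only subtle point, and what I would consider the core content, is the correct use of the cofinal pull-back lemma to rewrite both inverse limits over subsets of $I$ as inverse limits over directed subsets of $J$; once this transfer is done, the surjectivity follows immediately from the factorization through $\varprojlim_J W$ and the characterization of weak flabbiness recalled in Remark \ref{Remark: every directed wf}.
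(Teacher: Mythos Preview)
Your proof is correct and follows essentially the same approach as the paper: transport a directed subset $K\subseteq I$ to the directed subset $f(K)\subseteq J$, use Lemma \ref{Lemma: cofinal pull-back} to identify the restriction $\varprojlim_{i\in I}(f^{-1}W)_i\to\varprojlim_{k\in K}(f^{-1}W)_k$ with $\varprojlim_{j\in f(I)}W_j\to\varprojlim_{j\in f(K)}W_j$, and then deduce surjectivity of the latter from Remark \ref{Remark: every directed wf} by factoring through $\varprojlim_{j\in J}W_j$. The paper's proof is identical up to notation.
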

\begin{proof}
Let $I'\subseteq I$ be a directed subset of $I$. 
We consider $f(I')\subseteq f(I)$ as 
subsets of $J$ with the order relation induced by $J$. 
They are both directed poset. They are possibly not open 
in $J$. However, with an abuse, let us set $W(X(f(I)))=
\varprojlim_{j\in f(I)}W_j$ and similarly for 
$W(X(f(I')))$. Since $W$ is weakly flabby, both 
restrictions $W(X(J))\to W(X(f(I)))$ and 
$W(X(J))\to W(X(f(I')))$ are surjective by Remark \ref{Remark: every directed wf}. 
Hence, so is the restriction map $W(X(f(I)))\to 
W(X(f(I')))$ by composition. Now, by Lemma \ref{Lemma: cofinal pull-back} the 
restriction $f^{-1}W(X(I))\to f^{-1}W(X(I'))$ equals 
the restriction $W(X(f(I)))\to W(X(f(I')))$.
The claim follows.
\end{proof}
In the proof of Proposition 
\ref{Prop: inverse image of weakly flabby}
a key ingredient is Lemma 
\ref{Lemma: cofinal pull-back} in which the 
fact that the posets are directed is a crucial assumption.
The following proposition is a similar statement for 
possibly not directed posets.

\begin{proposition}
\label{Proposition: godement pull-back}
Let $f:I\to J$ be a map of posets that preserves 
the order relations. 
Assume moreover that $I$ is directed. 
Then the following hold:
\begin{enumerate}
\item Let $A$ be a skyscraper sheaf on $X(J)$, then 
the inverse image $f^{-1}A$ of $A$ is weakly flabby.
\item Let $F$ be a sheaf of $R$-modules over $J$ 
and let $Gode(F)$ be the Godement sheaf associated to 
$F$. Then $f^{-1}(Gode(F))$ is wealkly flabby.
\item The inverse image of the Godement resolution of 
$F$ is a weakly flabby resolution of $f^{-1}(F)$.
\end{enumerate}
\end{proposition}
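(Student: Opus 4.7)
The plan is to prove the three parts in order. Part (1) is the main computational step, while (2) follows from (1) by a product argument and (3) is essentially formal.

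For (1), I would first describe $f^{-1}A$ explicitly as an inverse system on $I$. Writing $A = \sigma_{j,*}M$ for some $j \in J$ and some $R$-module $M$, we have $A_k = M$ whenever $k \in V(j)$ and $A_k = 0$ otherwise, with identity transitions inside $V(j)$ and zero transitions from $V(j)$ to its complement. Hence $(f^{-1}A)_i = A_{f(i)}$ equals $M$ for $i \in I_j := f^{-1}(V(j))$ and $0$ elsewhere. By Remark \ref{Remark: every directed wf}, it suffices to check that for every \emph{directed} subset $I' \subseteq I$, the restriction map
\begin{equation}
\varprojlim_{i \in I}(f^{-1}A)_i \;\longrightarrow\; \varprojlim_{i \in I'}(f^{-1}A)_i
\end{equation}
is surjective. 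Because $f$ preserves the order, $I_j$ is upward closed in $I$; combined with the directedness of $I$, this forces $I_j$ itself to be directed. A compatible sequence $(x_i)_{i \in I}$ must vanish outside $I_j$, and inside $I_j$ the identity transitions together with directedness force it to be constant with value in $M$; conversely, any such choice is compatible, since transitions from $I_j$ to its complement are zero and impose no constraint. Thus $\varprojlim_{i \in I}(f^{-1}A)_i$ equals $M$ if $I_j \neq \emptyset$ and $0$ otherwise, and likewise for $I'$ with $I_j$ replaced by $I' \cap I_j$. In every case the restriction is the identity or the zero map, hence surjective.

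For (2), the key observation is that $f^{-1}$ commutes with arbitrary products, which is immediate from the componentwise formula $(f^{-1}G)_i = G_{f(i)}$. Therefore
\begin{equation}
f^{-1}(Gode(F)) \;=\; \prod_{j \in J} f^{-1}\bigl(\sigma_{j,*}\sigma_j^{-1}F\bigr),
\end{equation}
and each factor is weakly flabby by (1). A product of weakly flabby sheaves on $X(I)$ is weakly flabby, since for any directed $I' \subseteq I$ the corresponding restriction is the product of the individual restrictions, and a product of surjective $R$-linear maps is surjective (coordinatewise choice of preimages). For (3), one applies $f^{-1}$ to the Godement resolution $0 \to F \to G^0 \to G^1 \to \cdots$ of $F$. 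The functor $f^{-1}$ is exact (Section \ref{Usual properties}), so the resulting sequence $0 \to f^{-1}F \to f^{-1}G^0 \to f^{-1}G^1 \to \cdots$ is a resolution; each $f^{-1}G^k$ is weakly flabby by (2), yielding the desired weakly flabby resolution of $f^{-1}F$.

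The main obstacle is the bookkeeping in (1): one must understand precisely how directedness of $I$ interacts with the upward-closed locus $I_j$ on which the pullback has non-trivial stalks, and verify that the zero transitions exiting $I_j$ do not create hidden constraints on compatible sequences. Once (1) is in hand, (2) and (3) are immediate from the compatibility of $f^{-1}$ with products and exact sequences.
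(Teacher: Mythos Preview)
Your proof is correct and follows essentially the same strategy as the paper. Parts (2) and (3) match the paper verbatim. For (1), you compute the sections of $f^{-1}(\sigma_{j,*}M)$ over $I$ and over a directed $I'$ directly by analyzing the upward-closed locus $I_j = f^{-1}(V(j))$; the paper takes a slightly slicker shortcut by observing that the restriction $f^{-1}A(X(I)) \to f^{-1}A(U)$ identifies with the restriction $\sigma_{j,*}A(O(f(I))) \to \sigma_{j,*}A(O(f(U)))$ of the original skyscraper sheaf on $X(J)$, which is surjective simply because $\sigma_{j,*}A$ is flabby there. Your explicit computation is what underlies that identification, so the two arguments are the same in substance.
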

\begin{proof}
Let $j\in X(J)$ and $A\in \Rmod$. Let us denote by 
$(A_k)_{k\in J}:=\sigma_{j,*}A$ the skyscraper sheaf at 
$j\in X(J)$ with value $A$ (cf. Section \ref{Section : flabby}). 
Since $A$ is flabby on $\{j\}$, 
so is $\sigma_{j,*}A$ on $X(J)$. 
We want to show that 
$F:=f^{-1}(\sigma_{j,*}A)$ is weakly flabby over 
$X(I)$. Let $U\subset X(I)$ be any open subset 
which is directed as a poset with the order relation 
induced by $X(I)$. 
Then, we need to show that 
the restriction map $F(X(I))\to F(U)$ is surjective.
Now, by the definition of $f^{-1}$, this restriction 
map identifies to 
the natural restriction map 
$\rho_{O(f(X(I))),O(f(U))}^{\sigma_{j,*}A}:\sigma_{j,*}A(O(f(X(I))))\to 
\sigma_{j,*}A(O(f(U)))$, which is surjective 
because $\sigma_{j,*}A$ is flabby.
\if{
By Lemma \ref{Lemma: S directed iff Shat is}, both 
$O(f(X(I)))$ and $O(f(U))$ are directed 
posets with respect to the partial order relation induced 
by $J$. Moreover, as in the proof of Proposition 
\ref{Prop: Constant is acyclic}, we have
$\sigma_{j,*}A(O(f(X(I))))=A$ (resp.  
$\sigma_{j,*}A(O(f(U)))=A$)
if $j\in O(f(X(I)))$ (resp. $j\in O(f(U))$), 
and $0$ otherwise. 
Therefore, this restriction map 
is either the identity map or the $0$ map whose target is $0$. 
In both cases the restriction map is surjective, and the claim i) follows.

\scomm{Credo che si semplifichi la parte precedente utilizzando che $\sigma_{j,*}(A)$ è flabby.}
}\fi
Let us now prove ii). By definition $Gode(F)=
\prod_{j\in J}\sigma_{j,*}\sigma_j^{-1}F$. Since 
$f^{-1}$ commutes with products 
and since products of weakly flabby is weakly flabby, it is 
enough to prove that if $S=\sigma_{j,*}A$ is a 
skyscraper sheaf on 
$X(J)$, then $f^{-1}S$ is weakly flabby. The claim then 
follows from i).
The third statement is also an immediate consequence of 
the exactness of $f^{-1}$ and of ii).
\end{proof}

\begin{theorem}\label{Thm: pull-back preserve coh}
Let $f:I\to J$ be a map of directed posets preserving the 
order relations and 
such that $f(I)$ is a cofinal subset of $J$. 
Then for all sheaves of $R$-modules $F$ on $X(J)$ one has
\begin{equation}
H^n(X(I),f^{-1}F)\;=\;
H^n(X(J),F)\;.
\end{equation}
In particular, $F$ is acyclic if, and only if, so is $f^{-1}F$.
\end{theorem}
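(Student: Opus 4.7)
The plan is to compute both cohomology groups using the same resolution, namely the Godement resolution of $F$ on $X(J)$ and its pull-back by $f$ on $X(I)$. The three key ingredients have all been set up: the exactness of $f^{-1}$ (Section \ref{Usual properties}), the preservation of weak flabbiness for the pull-back of the Godement resolution when the source poset is directed (Proposition \ref{Proposition: godement pull-back}), and the compatibility of global sections with pull-back along a cofinal order-preserving map of directed posets (Lemma \ref{Lemma: cofinal pull-back}).

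First, I would form the Godement resolution $0\to F\to G^0\to G^1\to\cdots$ of $F$ on $X(J)$, as recalled in Section \ref{Section : Godement}. Each $G^k$ is flabby, hence acyclic, so
\begin{equation}
H^n(X(J),F)\;\cong\;H^n\bigl(\Gamma(X(J),G^\bullet)\bigr)\;.
\end{equation}

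Next, applying the exact functor $f^{-1}$ to this resolution yields an exact sequence $0\to f^{-1}F\to f^{-1}G^0\to f^{-1}G^1\to\cdots$ of sheaves on $X(I)$. Since $I$ is directed, Proposition \ref{Proposition: godement pull-back}(iii) tells me that each $f^{-1}G^k$ is weakly flabby, hence acyclic on $X(I)$ by Theorem \ref{THM: weak flabby acyclic}. Therefore $f^{-1}G^\bullet$ is an acyclic resolution of $f^{-1}F$, and
\begin{equation}
H^n(X(I),f^{-1}F)\;\cong\;H^n\bigl(\Gamma(X(I),f^{-1}G^\bullet)\bigr)\;.
\end{equation}

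It remains to identify the two complexes of global sections. Here Lemma \ref{Lemma: cofinal pull-back} applies, since $f$ preserves the order, the posets $I$ and $J$ are both directed, and $f(I)$ is cofinal in $J$. It gives a natural isomorphism of functors $\Gamma(X(J),-)\simto\Gamma(X(I),-)\circ f^{-1}$. Applied term by term to the Godement resolution, and using that the isomorphism is functorial with respect to the sheaf maps $G^k\to G^{k+1}$, this gives an isomorphism of complexes $\Gamma(X(J),G^\bullet)\simto\Gamma(X(I),f^{-1}G^\bullet)$. Taking cohomology yields the desired isomorphism $H^n(X(J),F)\simto H^n(X(I),f^{-1}F)$. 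The last assertion, that $F$ is acyclic iff $f^{-1}F$ is, is then immediate from the case $n\geq 1$.

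The main obstacle I anticipate is not a deep one but a bookkeeping point: verifying that the Godement-type resolution built on $X(J)$ really does admit a compatible pull-back that remains a resolution in the appropriate sense, and that the isomorphism of Lemma \ref{Lemma: cofinal pull-back} is natural enough to be applied simultaneously to every term of $G^\bullet$ in a way that commutes with the differentials. Both points follow from the constructions already in place (exactness of $f^{-1}$ and naturality of the map $\varprojlim_{j\in J}\to\varprojlim_{i\in I}$ in the sheaf), so no new ingredient is required.
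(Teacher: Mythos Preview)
Your proof is correct and follows essentially the same approach as the paper: take the Godement resolution of $F$, pull it back (using exactness of $f^{-1}$ and Proposition \ref{Proposition: godement pull-back} to ensure acyclicity of the terms), and then invoke Lemma \ref{Lemma: cofinal pull-back} to identify the two complexes of global sections. The only difference is cosmetic---you make the appeal to Theorem \ref{THM: weak flabby acyclic} explicit where the paper leaves it implicit inside the reference to Proposition \ref{Proposition: godement pull-back}.
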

\begin{proof}
The proof is straightforward. 
We use \eqref{eq: acyclic-1} to compute the cohomology 
of $f^{-1}F$. 
Let $0\to F\to G^0\to G^1\to\cdots$ be the Godement 
resolution of $F$. 
Its pull-back $0\to f^{-1}F\to f^{-1}G^0\to 
f^{-1}G^1\to\cdots$ is a resolution of $f^{-1}F$ because $f^{-1}$ is an
exact functor. Every term of the sequence is acyclic 
by Proposition \ref{Proposition: godement pull-back}. 
Therefore, by \eqref{eq: acyclic-1}, 
we know that the complex 
$0\to \Gamma(X(I),f^{-1}F)\to 
\Gamma(X(I),f^{-1}G^0)\to 
\Gamma(X(I),f^{-1}G^1)\to\cdots$ computes the 
cohomology of $f^{-1}F$. Now, Lemma 
\ref{Lemma: cofinal pull-back} ensures that this 
complex equals $0\to \Gamma(X(J),F)\to 
\Gamma(X(J),G^0)\to 
\Gamma(X(J),G^1)\to\cdots$ and the claim follows.
\end{proof}

Theorem \ref{Thm: pull-back preserve coh} 
holds sometimes for non directed posets as we will see 
in Proposition \ref{LEMMA : f and g} in the case of 
Galois connections between posets. 
Let us now show that Theorem 
\ref{Thm: pull-back preserve coh} 
implies Theorem 
\ref{THM2} quite directly, which we translate in term of 
sheaves. 
\begin{corollary}[Theorem \ref{THM2}]
\label{Cor: THM 2}
Let $I$ and $J$ be directed posets and 
$I'\subseteq I$ and 
$J'\subseteq J$ be cofinal directed posets. 
Let $p:I'\to J'$ be a surjective map preserving 
the order relations. Let $A$ and $S$ be sheafs of 
$R$-modules over $X(I)$ and $X(J)$ respectively.
Assume that the restriction $A_{|I'}$ of $A$ to $X(I')$ is 
isomorphic to the pull-back $p^{-1}(S_{|J'})$ of 
the restriction $S_{|J'}$ of $S$ to $X(J')$
\begin{equation}
\psi\;:\;A_{|I'}\xrightarrow{\;\sim\;} p^{-1}(S_{|J'})\;.
\end{equation}
Then, for every integer $n\geq 0$ one has
\begin{equation}
H^n(X(I),A)\;\cong\;H^n(X(J),S)\;.
\end{equation}
\end{corollary}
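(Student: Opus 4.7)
The plan is to chain together three applications of Theorem \ref{Thm: pull-back preserve coh}, viewing the sheaf-theoretic isomorphism $\psi:A_{|I'}\simto p^{-1}(S_{|J'})$ as the bridge that lets us transfer cohomology from $X(I)$ to $X(J)$ via the intermediate spaces $X(I')$ and $X(J')$.

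First I would restrict $A$ to $X(I')$. Since $I'\subseteq I$ is a cofinal directed subset, the inclusion $\iota_I:I'\hookrightarrow I$ is a map of directed posets with cofinal image, so Theorem \ref{Thm: pull-back preserve coh} gives
\begin{equation}
H^n(X(I),A)\;\cong\;H^n(X(I'),\iota_I^{-1}A)\;=\;H^n(X(I'),A_{|I'})\;.
\end{equation}
Symmetrically, using that $J'\subseteq J$ is cofinal directed, the same theorem yields
\begin{equation}
H^n(X(J),S)\;\cong\;H^n(X(J'),S_{|J'})\;.
\end{equation}

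Next I would handle the map $p:I'\to J'$. By hypothesis $p$ is order-preserving and surjective, in particular $p(I')=J'$ is cofinal in $J'$, and both $I'$ and $J'$ are directed. So Theorem \ref{Thm: pull-back preserve coh} applies once more and gives
\begin{equation}
H^n(X(I'),p^{-1}(S_{|J'}))\;\cong\;H^n(X(J'),S_{|J'})\;.
\end{equation}
Using the isomorphism $\psi:A_{|I'}\simto p^{-1}(S_{|J'})$, the left-hand side equals $H^n(X(I'),A_{|I'})$, and concatenating the three isomorphisms above produces the desired $H^n(X(I),A)\cong H^n(X(J),S)$.

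There is no real obstacle here; the content has been absorbed into Theorem \ref{Thm: pull-back preserve coh}, whose proof already uses the Godement resolution and the preservation of weak flabbiness under pull-back (Proposition \ref{Proposition: godement pull-back}). The only point requiring care is checking that each of the three maps $\iota_I$, $\iota_J$, $p$ falls within the hypotheses of that theorem, namely that it is order-preserving with cofinal image between directed posets. This is immediate from the hypothesis that $I',J'$ are cofinal directed subsets and that $p$ is surjective. Naturality of pull-back (so that $A_{|I'}$ and $p^{-1}(S_{|J'})$ are actually comparable as sheaves on $X(I')$) is automatic from the functoriality already set up in Section \ref{Section : inv syst and sh}.
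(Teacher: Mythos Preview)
Your proof is correct and follows essentially the same approach as the paper: three applications of Theorem \ref{Thm: pull-back preserve coh} (to the cofinal inclusions $I'\hookrightarrow I$, $J'\hookrightarrow J$, and to the surjection $p:I'\to J'$), followed by transport along the isomorphism $\psi$. The paper's proof is slightly more terse in that it phrases the first two applications as a reduction to the case $I=I'$, $J=J'$, but the logical content is identical.
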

\begin{proof}
By Theorem \ref{Thm: pull-back preserve coh} applied to 
the inclusions $I'\to I$ and $J'\to J$ we have  
$H^n(X(I),A)=H^n(X(I'),A_{|I'})$ and 
$H^n(X(J),S)=H^n(X(J'),S_{|J'})$, for all 
integer $n\geq 0$. 
Hence, we can assume $I=I'$ and $J=J'$.
Again, Theorem
%The claim then follows directly from Theorem 
\ref{Thm: pull-back preserve coh} 
then ensures $H^n(X(J),S)=H^n(X(I),f^{-1}S)$ and  
$H^n(X(I),f^{-1}S)\cong H^n(X(I),A)$ because $A\cong f^{-1}S$.
\end{proof}

\section{Direct image and exactness}
As mentioned, the direct image functor $f_*$ 
is not exact in general. Spectral sequences are the 
tools that is necessary to compute the cohomology 
spaces of $f_{*}F$ from those of $F$. 
However, we now provide conditions ensuring that $f_*$ 
preserves the cohomology. Notice that the posets are 
possibly not directed.

\begin{proposition}
\label{Prop.: f-lower-star exact with condition fibers}
Let $f:I\to J$ be an order preserving function between posets. 
Let $F$ be a sheaf of $R$-modules over $X(I)$. 
Assume that for all $j\in J$ the restriction 
$F_{|f^{-1}(\Lambda(j))}$ is acyclic as a sheaf over 
the open $U_j:=f^{-1}(\Lambda(j))$. 
That is, for all integer $n\geq 1$ 
one has 
\begin{equation}\label{eq : HnUjF0}
H^n(U_j,F)\;=\;0\;.
\end{equation}
Then
\begin{enumerate}
\item For all injective (resp. flabby) resolution 
$0\to F\to I^1\to I^2\to \cdots$ of $F$, the 
push-forward $0\to f_*F\to f_*I^1\to f_*I^2\to \cdots$ 
is an injective (resp. flabby) resolution of $f_*F$ (i.e. it 
remains exact).
\item For all integer $n\geq 0$ one has
\begin{equation}
H^n(X(J),f_*F)\;=\;
H^n(X(I),F)\;.
\end{equation}
\end{enumerate}
\end{proposition}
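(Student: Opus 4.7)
My plan is to reduce both statements to a stalkwise exactness check, based on the basic identity $(f_*G)_j = G(U_j)$ for any sheaf $G$ on $X(I)$ and any $j \in J$.

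First I would establish that stalk computation: since $\Lambda(j)$ is the smallest open of $X(J)$ containing $j$, and $(f_*G)(\Lambda(j)) = G(f^{-1}(\Lambda(j))) = G(U_j)$ by the very definition of the push-forward, the stalk of $f_*G$ at $j$ is exactly the sections $\Gamma(U_j, G)$.

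For part (1), I would start with a flabby (resp. injective) resolution $0 \to F \to I^0 \to I^1 \to \cdots$ of $F$. Each $f_*I^k$ remains flabby (resp. injective), as $f_*$ preserves both properties (recalled in \S\ref{Usual properties} and \S\ref{Section : flabby}). The crux is the exactness of $0 \to f_*F \to f_*I^0 \to f_*I^1 \to \cdots$, which I check stalkwise: at $j \in J$ the sequence becomes
\[ 0 \to F(U_j) \to I^0(U_j) \to I^1(U_j) \to \cdots, \]
namely the complex of sections over $U_j$ of the restricted resolution $I^\bullet|_{U_j}$ of $F|_{U_j}$. Flabbiness is preserved by restriction to open subsets (and injective sheaves are a fortiori flabby), so $I^\bullet|_{U_j}$ is a flabby, hence acyclic, resolution of $F|_{U_j}$. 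By the standard interpretation of cohomology via acyclic resolutions recorded in \eqref{eq: acyclic-1}, the cohomology of the displayed complex in degree $n \geq 1$ equals $H^n(U_j, F)$, which vanishes by hypothesis \eqref{eq : HnUjF0}. Together with the left exactness of $f_*$ in degree zero, this yields the desired exactness on stalks, and therefore as sheaves on $X(J)$.

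For part (2), $f_*I^\bullet$ is now an acyclic resolution of $f_*F$, so I may compute
\[ H^n(X(J), f_*F) \;=\; H^n\bigl(\Gamma(X(J), f_*I^\bullet)\bigr). \]
But $\Gamma(X(J), f_*I^k) = I^k(f^{-1}(X(J))) = I^k(X(I)) = \Gamma(X(I), I^k)$, so this complex coincides with $\Gamma(X(I), I^\bullet)$, whose cohomology is $H^n(X(I), F)$. The main obstacle is the stalkwise exactness in part (1): although $f_*$ is not exact in general, the fiber-wise acyclicity hypothesis translates, through the acyclic-resolution formalism, into genuine exactness of the sections complex over each $U_j$, and hence at every stalk of the push-forward. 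Once (1) is in hand, (2) is essentially bookkeeping since $f^{-1}(X(J)) = X(I)$.
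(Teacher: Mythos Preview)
Your proposal is correct and follows essentially the same approach as the paper: both reduce to the stalkwise identity $(f_*G)_j=G(U_j)$, check exactness of the pushed-forward resolution at each stalk using acyclicity of the $I^k$ restricted to $U_j$, and then deduce (ii) from $\Gamma(X(J),f_*I^k)=\Gamma(X(I),I^k)$. The only cosmetic difference is that the paper splices the long resolution into short exact sequences $0\to F^k\to I^k\to F^{k+1}\to 0$ and runs a dimension-shifting induction to show each $F^k$ is acyclic on $U_j$, whereas you invoke the acyclic-resolution formula \eqref{eq: acyclic-1} in one stroke; your packaging is a bit more direct but the underlying argument is the same.
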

\begin{proof}
Let $0\to F\to I^1\to I^2\to \cdots$ be an injective 
(resp. flabby) 
resolution of $F$. Let us set $F^0:=F$ and, 
for all $k\geq 0$, let $F^{k+1}$ be the cokernel of the 
inclusion of $F^k$ into $I^k$. 
We then have the classical diagram
\begin{eqnarray}\label{eq: sequence long with cross}
\xymatrixrowsep{0.1in}
%\xymatrixcolsep{2in}
\xymatrix{
&&&F^1\ar[rd]&&&&F^3\ar[rd]&&&&\\
0\ar[rr]&&I^0\ar[ur]\ar[rr]&&I^1\ar[rd]\ar[rr]&&I^2\ar[ur]\ar[rr]&&I^3\ar[rr]\ar[rd]&&\cdots\\
&F\ar[ur]&&&&F^2\ar[ur]&&&&F^4\ar[ur]&&
}
\end{eqnarray}
We now apply the functor $f_*$ to this diagram. We 
know that $f_*I^k$ remains 
injective (resp. flabby), hence acyclic. Now 
we claim that $0\to f_*F\to f_*I^0\to f_* I^1\to\cdots$ 
is a resolution of $f_*F$, i.e. this sequence 
is exact. This condition can be checked on the stalks. 
Hence, we have to prove that for all $j\in J$ and 
for all $k\geq 0$, the sequence 
$0\to (f_*F^k)_j\to (f_*I^k)_j\to (f_*F^{k+1})_j\to 0$ 
is exact.
As there is a minimal open subset $\Lambda(j)$ 
containing $j$, then if we set 
$U_j:=f^{-1}(\Lambda(j))$, 
this sequence coincides with the sequence
$0\to F^k(U_j)\to I^k(U_j)\to F^{k+1}(U_j)\to 0$.
In other words we have to show that,  for every 
$k\geq 0$, 
$\Gamma(U_j,-)$ sends the short exact
sequence $0\to F^k\to I^k\to F^{k+1}\to 0$ into an exact one. Let us consider the long exact sequence of 
cohomology groups
\begin{equation}
0\to H^0(U_j,F^k)\to H^0(U_j,I^k)\to H^0(U_j,F^{k+1})
\to H^1(U_j,F^k)\to H^1(U_j,I^k)\to H^1(U_j,F^{k+1})\cdots
\end{equation}
Since $I^k$ is acyclic on $U_j$ and 
we have $H^n(U_j,I^k)=0$ for all $k\geq 0$ and all 
$n\geq 1$. Therefore for all $k\geq 0$ and all 
$n\geq 1$ we have an isomorphism 
\begin{equation}\label{eq : induction HnFk}
H^n(U_j,F^{k+1})\;\simto \; H^{n+1}(U_j,F^k)\;.
\end{equation}
Now, for $k=0$, our assumption gives 
$H^n(U_j,F^0)=0$ for all $n\geq 1$ because $F=F^0$ 
is acyclic on $U_j$. The isomorphism 
\eqref{eq : induction HnFk} ensures by 
induction that $F^k$ is acyclic on $U_j$ for all 
$k\geq 0$. Therefore the sequence 
$0\to f_*F\to f_*I^0\to f_* I^1\to\cdots$ 
is exact and it is an injective (resp. flabby) 
resolution of $f_*F$.

It follows then by \eqref{eq : H^n=R^nGamma} 
that $H^{n}(X(J),f_*F)=
R^n\Gamma(X(J),f_*I^\bullet)$. 
Finally, for all $k\geq 0$, the definition of push-forward 
gives $\Gamma(X(J),f_* I^k)=\Gamma(X(I), I^k)$. Hence the 
sequence $0\to \Gamma(X(J),f_* F)\to \Gamma(X(J),f_* I^0)\to \Gamma(X(J),f_* I^1)\to \cdots$ coincides with 
$0\to \Gamma(X(I),F)\to \Gamma(X(I),I^0)\to\Gamma(X(I),I^1)\to\cdots $ 
which computes the cohomology of $F$ by 
\eqref{eq : H^n=R^nGamma}. 
The claim follows.
\end{proof}

\begin{remark}
In Proposition 
\ref{LEMMA : f and g} we will treat a special 
situation where $f_*$ preserves also 
\emph{weakly-flabby} 
resolutions.
\end{remark}

An interesting case where  Proposition 
\ref{Prop.: f-lower-star exact with condition fibers} 
applies is the following
\begin{theorem}\label{Prop: maximum push-forward}
Let $f:I\to J$ be an order preserving function between posets. 
Let $F$ be a sheaf of $R$-modules over $X(I)$. 
Assume that for every $j\in J$ the set 
$U_j=f^{-1}(\Lambda(j))$ satisfies at least one 
among the following conditions hold:
\begin{enumerate}
\item $U_j$ is empty;
\item $U_j$ 
has a unique maximal element (i.e. it is of the form 
$\Lambda(i)$, for some $i\in I$);
\item $U_j$ is a directed poset admitting a countable 
cofinal directed poset $I_j'$ and the system 
$(A_k)_{k\in I'_j}:=F_{|I'_j}$ satisfies 
Mittag-Leffler condition \eqref{eq :ML condition}.
\end{enumerate}
Then, the conclusions i) and ii) of Proposition \ref{Prop.: f-lower-star exact with condition fibers} hold.
\end{theorem}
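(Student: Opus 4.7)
The plan is to reduce everything to Proposition \ref{Prop.: f-lower-star exact with condition fibers}. That proposition already gives conclusions i) and ii) once we verify its single hypothesis \eqref{eq : HnUjF0}, namely that $H^{n}(U_{j},F)=0$ for all $j\in J$ and all $n\geq 1$. So the whole work is to prove this acyclicity fiber-by-fiber, in each of the three cases (a), (b), (c).

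Case (a) is immediate, since any sheaf on the empty space has vanishing cohomology. Case (b) is also essentially formal: if $U_{j}=\Lambda(i)$ for some $i\in I$, then $\Lambda(i)$ is the smallest open containing its maximum $i$, so $\Gamma(X(\Lambda(i)),G)=G(\Lambda(i))=G_{i}$ for every sheaf $G$. The functor $G\mapsto G_{i}$ is the stalk functor, which is exact; hence $R^{n}\Gamma(X(\Lambda(i)),-)=0$ for $n\geq 1$, giving the claimed acyclicity for $F_{|U_{j}}$.

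Case (c) is where the real work is, and it proceeds by successively shrinking the indexing poset. First, $U_{j}$ is directed and $I'_{j}\subseteq U_{j}$ is a cofinal directed subset, so Theorem \ref{Thm: pull-back preserve coh} applied to the inclusion $I'_{j}\hookrightarrow U_{j}$ yields $H^{n}(X(U_{j}),F)=H^{n}(X(I'_{j}),F_{|I'_{j}})$. Next, since $I'_{j}$ is countable and directed, Lemma \ref{Lemma: directed subset N} provides an order-preserving $\tau:\mathbb{N}\to I'_{j}$ with cofinal image, and a second application of Theorem \ref{Thm: pull-back preserve coh} reduces the question further to $H^{n}(X(\mathbb{N}),\tau^{-1}(F_{|I'_{j}}))$. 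At this point, I would check that the Mittag-Leffler condition \eqref{eq :ML condition} transfers through $\tau$: given $n\in\mathbb{N}$, picking $k\geq \tau(n)$ in $I'_{j}$ witnessing ML and then $m\in\mathbb{N}$ with $\tau(m)\geq k$ (possible by cofinality), one verifies that for every $p\geq m$ the image $\rho^{A}_{\tau(p),\tau(n)}(A_{\tau(p)})$ is independent of $p$ and equal to $\rho^{A}_{\tau(m),\tau(n)}(A_{\tau(m)})$. Hence the pulled-back system satisfies ML on $\mathbb{N}$, so the classical Mittag-Leffler Theorem \ref{THM1} gives $\varprojlim^{(1)}=0$, while Mitchell's result (cited in the introduction after Theorem \ref{THM1}) gives $\varprojlim^{(n)}=0$ for every $n\geq 2$ on any $\mathbb{N}$-indexed system. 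Thus $H^{n}(X(\mathbb{N}),\tau^{-1}(F_{|I'_{j}}))=0$ for all $n\geq 1$, which propagates back to $H^{n}(U_{j},F)=0$.

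Having established \eqref{eq : HnUjF0} in all three cases, Proposition \ref{Prop.: f-lower-star exact with condition fibers} applies and delivers conclusions i) and ii) directly. The main obstacle in the argument is the bookkeeping in case (c): one must verify that the ML condition really does transfer through both the cofinal inclusion $I'_{j}\hookrightarrow U_{j}$ and the order-preserving map $\tau:\mathbb{N}\to I'_{j}$, and one must rely on Mitchell's theorem to handle derived functors of order $\geq 2$, since the ML hypothesis alone only controls $\varprojlim^{(1)}$.
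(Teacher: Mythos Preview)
Your proposal is correct and follows essentially the same approach as the paper: both verify the fiberwise acyclicity hypothesis \eqref{eq : HnUjF0} of Proposition \ref{Prop.: f-lower-star exact with condition fibers} by treating the three cases separately, with case (b) handled via exactness of the stalk functor. The only difference is that in case (c) the paper compresses everything into a single citation of Theorem \ref{THM1}, whereas you spell out the reduction to $\mathbb{N}$ via Theorem \ref{Thm: pull-back preserve coh} and Lemma \ref{Lemma: directed subset N}, verify that the Mittag-Leffler condition transfers along $\tau$, and separate the vanishing of $\varprojlim^{(1)}$ (from ML) from that of $\varprojlim^{(n)}$, $n\geq 2$ (from Mitchell); this is exactly what the paper's terse citation is implicitly invoking.
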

\begin{proof}
If i) or iii) hold for $U_j$, 
we know by Theorem \ref{THM1} that 
$F_{|U_j}$ is acyclic and the condition of Proposition 
\ref{Prop.: f-lower-star exact with condition fibers} 
is fulfilled. If ii) holds for $U_j$, then $U_j=\Lambda(i)$ for some $i\in I$. Now, the functor 
$\Gamma(\Lambda(i),-)$ is the fiber functor associating 
to a sheaf $F$ its stalk $F_i$ at $i$. Therefore, it is an 
exact functor and it preserves 
injective resolutions. Hence,
for every sheaf $F$ of $R$-modules over 
$X(I)$ the restriction $F_{|\Lambda(i)}$ is acyclic on $\Lambda(i)$. 
Proposition 
\ref{Prop.: f-lower-star exact with condition fibers} then applies.
\end{proof}
\begin{remark}
It was proved by O.Laudal \cite{LAUDAL} that the 
only posets $U$ over which 
every sheaf of $R$-modules is acyclic 
are those admitting a maximum element (i.e. $U=\Lambda(i)$ for some $i\in U$). 
Therefore, any generalization of 
Theorem \ref{Prop: maximum push-forward} to 
more general maps $f$ requires restrictions on the 
class of sheaves $F$ that we consider, as we did in 
condition i).
For instance, let us assume that for all $j\in J$ the poset
$U_j=f^{-1}(\Lambda(j))$ has only finitely many 
maximal elements. This mens that $U_j$ is a finite union 
of open posets of the form $\Lambda(i)$. In this 
situation it might be interesting to use Mayer-Vietoris 
long exact sequence to obtain combinatoric conditions 
on $F$ ensuring \eqref{eq : HnUjF0}. 
\if{However, for more than $3$ maximal elements in 
$U_j$, 
this seems to reduce drastically 
the class of sheaves for which the proposition applies. 
More seriously, similar ideas should be carried out with 
the use of spectral sequences.}\fi
\end{remark}

From another angle, if we assume that $I$ is directed, 
then it might be interesting to replace it by a cofinal 
directed subset $I'$. 
This operation preserve the cohomology 
groups of $F$ and reduces the size of the sets 
$f^{-1}(\Lambda(j))$ (which makes possibly easier to verify  \eqref{eq : HnUjF0}). However, it should be taken with 
some precaution because it does not preserve 
the push-forward (i.e. $f_*F\neq f_*(F_{|I'})$). The claim is the following.

\begin{corollary}
\label{Prop.: f-lower-star exact with condition fibers directed}
Let $I$ be a directed poset and $F$ a sheaf of 
$R$-modules over $X(I)$. 
Let $I'\subseteq I$ be a directed cofinal subset of $I$ 
and let $f:I'\to J$ be an order preserving function between 
posets such that, for all $j\in J$, the restriction 
$F_{|f^{-1}(\Lambda(j))}$ is acyclic as a sheaf over 
the open subset $U_j':=f^{-1}(\Lambda(j))\subset X(I')$.  
That is, for all integer $n\geq 1$, one has 
$H^n(U_j',F_{|X(I')})=0$. 
In particular, this condition is automatically satisfied 
if one of the conditions i), ii), iii) of Theorem 
\ref{Prop: maximum push-forward} holds 
for $F_{|U_j'}$. 
Then, for all integer $n\geq 0$ one has
\begin{equation}
\qquad\qquad\qquad
\qquad
H^n(X(J),f_*(F_{|X(I')}))\;=\;
H^n(X(I),F)\;.\qquad\qquad\qquad\Box
\end{equation}
\end{corollary}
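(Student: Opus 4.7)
The plan is to reduce the statement to the directly available results of the paper by a two step argument: first restricting from $I$ to the cofinal directed subset $I'$ via the pull-back theorem, and then pushing forward from $I'$ to $J$ via the earlier criterion for exactness of $f_*$.

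First I would invoke Theorem \ref{Thm: pull-back preserve coh} applied to the inclusion $\iota: I' \hookrightarrow I$. Both $I$ and $I'$ are directed, $\iota$ preserves the order, and by hypothesis $\iota(I') = I'$ is cofinal in $I$. Hence, for every integer $n \geq 0$,
\begin{equation}
H^n(X(I), F) \;=\; H^n(X(I'), \iota^{-1}F) \;=\; H^n(X(I'), F_{|X(I')}).
\end{equation}
This reduces the claim to proving $H^n(X(J), f_*(F_{|X(I')})) = H^n(X(I'), F_{|X(I')})$.

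Second I would apply Proposition \ref{Prop.: f-lower-star exact with condition fibers} to the order preserving map $f: I' \to J$ and to the sheaf $G := F_{|X(I')}$ on $X(I')$. For every $j \in J$, the set $U_j' = f^{-1}(\Lambda(j))$ is an open subset of $X(I')$, and by hypothesis $G_{|U_j'} = F_{|U_j'}$ is acyclic, i.e.\ $H^n(U_j', G) = 0$ for all $n \geq 1$. This is exactly \eqref{eq : HnUjF0} for $G$, so the conclusion ii) of Proposition \ref{Prop.: f-lower-star exact with condition fibers} gives
\begin{equation}
H^n(X(J), f_* G) \;=\; H^n(X(I'), G)
\end{equation}
for every $n \geq 0$. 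Chaining this with the first step yields the desired equality.

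Finally, for the "In particular" assertion, I would simply observe that the argument of Theorem \ref{Prop: maximum push-forward} shows that whenever $U_j'$ satisfies one of the conditions i), ii), iii) listed there (with $F$ replaced by $F_{|X(I')}$), the restriction $F_{|U_j'}$ is automatically acyclic: condition i) makes the cohomology trivial, condition ii) makes $\Gamma(U_j', -)$ the exact stalk functor at the maximal element, and condition iii) is the classical Mittag-Leffler hypothesis of Theorem \ref{THM1}. Thus the acyclicity hypothesis holds and the previous argument applies. There is no real obstacle here, since every ingredient has been prepared in the preceding sections; the only mild subtlety is to keep track of the fact that the push-forward $f_*$ is applied to the \emph{restricted} sheaf $F_{|X(I')}$, not to $F$ itself, so the restriction and push-forward steps must be performed in this order.
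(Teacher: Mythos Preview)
Your proof is correct and is precisely the intended argument: the paper states this corollary without proof (the $\Box$ at the end of the statement), because it is an immediate combination of Theorem \ref{Thm: pull-back preserve coh} for the cofinal inclusion $I'\hookrightarrow I$ and Proposition \ref{Prop.: f-lower-star exact with condition fibers} for the map $f:I'\to J$, exactly as you wrote.
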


Another interesting case where Corollary 
\ref{Prop.: f-lower-star exact with condition fibers directed} applies 
is of course given by the poset 
 of natural numbers $\mathbb{N}$, 
where \emph{every bounded} open subset has a 
maximum element. 
We obtain the following corollary. Notice that 
\emph{no cofinality condition is required for the inclusion 
of $f(I)$ in $J$.}

\begin{corollary}[Case of a totally ordered countable poset]
\label{Cor.: f-lower-star N}
Let $I$ be a poset and $F$ a sheaf of 
$R$-modules over $X(I)$. 
Assume that $I$ is directed and has a totally ordered 
cofinal subset $N$ which is at most countable 
(i.e. $N$ is finite or isomorphic to 
$(\mathbb{N},\leq)$).\footnote{By Lemma \ref{Lemma: 
directed subset N}, this is equivalent to the simple 
existence of a cofinal subset in $I$ which is at most 
countable.} 
Let $f:N\to J$ be an order preserving function between 
posets such that, for all $j\in J$  the following 
condition holds
\begin{enumerate}
\item if $U_j:=f^{-1}(\Lambda(j))=N$, then
the restriction of $F_{|N}$ satisfies 
Mittag-Leffler condition \eqref{eq :ML condition}.
\end{enumerate}
Then, for all integer $n\geq 0$ one has
\begin{equation}\label{eq : Hnf*countable}
H^n(X(J),f_*(F_{|X(N)}))\;=\;
H^n(X(I),F)\;.
\end{equation}
In particular, i) is an empty condition if 
for every $j\in J$, there exists $\eta\in N$ such that 
$f(\eta)\not\leq j$ (i.e. $f^{-1}(\Lambda(j))
\neq N$, for all $j\in J$).
\hfill$\Box$
\end{corollary}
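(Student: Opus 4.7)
The plan is to chain together two earlier results: Theorem \ref{Thm: pull-back preserve coh} to reduce from the sheaf $F$ on $X(I)$ to its restriction $F_{|X(N)}$, and then Corollary \ref{Prop.: f-lower-star exact with condition fibers directed} to push forward from $N$ to $J$. First observe that a totally ordered set is a fortiori directed, so $N$ is a directed cofinal subset of the directed poset $I$, which brings us into the setting of Corollary \ref{Prop.: f-lower-star exact with condition fibers directed} with $I':=N$.

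The crucial step is to verify that for every $j\in J$, the sheaf $F_{|X(N)}$ is acyclic on the open subset $U_j:=f^{-1}(\Lambda(j))$ of $X(N)$, and for this I would invoke the three conditions (a), (b), (c) of Theorem \ref{Prop: maximum push-forward}. The key observation is that since $f:N\to J$ is order preserving, $U_j$ is a down-set in $N$, i.e.\ if $k\in U_j$ and $k'\leq k$ in $N$, then $f(k')\leq f(k)\leq j$ so $k'\in U_j$. Because $N$ is totally ordered and at most countable (and identifies, up to order isomorphism, with an initial segment of $\mathbb{N}$), its down-sets fall into exactly three mutually exclusive classes:
\begin{equation}
U_j=\emptyset,\qquad U_j=\Lambda(m)\text{ for some }m\in N,\qquad U_j=N.
\end{equation}
These match precisely cases (a), (b), (c) of Theorem \ref{Prop: maximum push-forward}. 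In case (a) there is nothing to check. In case (b), $U_j$ has a unique maximum $m$ so that $U_j=\Lambda(m)$, and condition (b) is verified automatically. In case (c), $U_j=N$ is itself a countable directed cofinal subset of itself, so the acyclicity of $F_{|N}$ on $N$ is furnished by hypothesis (i), which imposes the classical Mittag-Leffler condition \eqref{eq :ML condition} precisely in this case.

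With the acyclicity hypothesis verified, Corollary \ref{Prop.: f-lower-star exact with condition fibers directed} delivers exactly the equality \eqref{eq : Hnf*countable}. For the final ``In particular'' statement: the existence of some $\eta\in N$ with $f(\eta)\not\leq j$ means $\eta\notin U_j$, hence $U_j\subsetneq N$; thus case (c) never occurs and we are always in case (a) or (b), making hypothesis (i) vacuous. I do not foresee a real obstacle here beyond the discrete case-analysis of down-sets of a totally ordered countable set; the only subtle point to handle carefully is the distinction between $N$ being finite (in which case $N=\Lambda(\max N)$ falls under case (b)) and $N$ being infinite (in which case case (c) genuinely uses hypothesis (i)). The footnote appeal to Lemma \ref{Lemma: directed subset N} is routine and simply lets one replace the hypothesis ``countable cofinal subset'' by ``totally ordered countable cofinal subset'', which is what the argument requires.
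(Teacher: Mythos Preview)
Your proposal is correct and follows exactly the route the paper intends: the corollary is stated with a bare $\Box$ precisely because it is meant as an immediate specialization of Corollary \ref{Prop.: f-lower-star exact with condition fibers directed}, and the sentence preceding it in the paper (``every bounded open subset has a maximum element'') is the same down-set trichotomy you spell out. Your case analysis and the handling of the finite-versus-infinite $N$ distinction are both accurate.
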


For the benefit of the reader we now translate Theorem 
\ref{THM3} in the sheaf language. The role of $I$ and 
$J$ is reversed with respect to the statement in the 
introduction and, even though it is not necessary, we 
assume the posets to be directed in order to allow the 
restriction to a cofinal poset.

\begin{corollary}[\protect{Theorem \ref{THM3}}]
\label{Cor : THM 3}
Let $(J,\leq)$ be a directed poset and $A$ a 
sheaf of $R$-modules over $X(J)$.
Assume that there exists 
a \emph{directed} partially ordered set $(I,\leq)$ 
and a sheaf of $R$-modules $T$ over $X(I)$ such that
\begin{enumerate}
\item There exists a cofinal directed subset 
$J'\subseteq J$, a cofinal directed subset $I'\subseteq I$
and a map $q:I'\to J'$ preserving the 
order relation such that for all $j\in J'$, the set 
$U_j=\{i\in I',q(i)\leq j\}$ is either empty, or it 
has a unique maximal element, 
or it has a countable cofinal directed poset $I'_j$ and 
$T_{|X(I'_j)}$ satisfies Mittag-Leffler condition 
\eqref{eq :ML condition}.
\item We have an $R$-linear isomorphism of sheaves 
$\phi:A_{|J'}\cong q_*T_{|I'}$.
\end{enumerate}
Then, for all integer $n\geq 0$, we have a canonical 
isomorphism
\begin{equation}
H^n(X(J), A)\;\cong\;
H^n(X(I), T)\;.
\end{equation}
In particular, if $T$ is acyclic then so is $A$.
%\hfill$\Box$
\end{corollary}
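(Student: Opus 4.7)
The plan is to chain together three previously-established results: the cohomology invariance under pull-back along a cofinal inclusion (Theorem \ref{Thm: pull-back preserve coh}), the given isomorphism $\phi$, and the cohomology invariance under the push-forward $q_{*}$ when the preimages $U_j$ satisfy the conditions (i) (Theorem \ref{Prop: maximum push-forward}).

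First, I would use Theorem \ref{Thm: pull-back preserve coh} applied to the inclusions $\iota_{J'}: J' \hookrightarrow J$ and $\iota_{I'}: I' \hookrightarrow I$, which are order-preserving maps of directed posets with cofinal image. This yields
\begin{equation}
H^n(X(J),A) \;\cong\; H^n(X(J'),A_{|J'})
\qquad\text{and}\qquad
H^n(X(I),T) \;\cong\; H^n(X(I'),T_{|I'})
\end{equation}
for every $n\geq 0$. Hence it suffices to establish the isomorphism
$H^n(X(J'), A_{|J'}) \cong H^n(X(I'), T_{|I'})$. Using the isomorphism $\phi: A_{|J'} \simto q_*(T_{|I'})$ from hypothesis (ii), this reduces to proving
\begin{equation}
H^n(X(J'),q_{*}(T_{|I'})) \;\cong\; H^n(X(I'),T_{|I'}).
\end{equation}

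For this identification I would invoke Theorem \ref{Prop: maximum push-forward} for the order-preserving map $q : I' \to J'$ and the sheaf $F := T_{|I'}$. The crucial point is that, by definition of the topology on $X(J')$, one has $q^{-1}(\Lambda(j)) = \{i\in I',\, q(i)\leq j\} = U_j$. Thus hypothesis (i) says exactly that each fiber $U_j$ is empty, has a unique maximal element, or is a directed poset admitting a countable cofinal directed subposet $I_j'$ on which $T$ satisfies the Mittag-Leffler condition \eqref{eq :ML condition}. These are precisely the three alternatives of Theorem \ref{Prop: maximum push-forward}, hence $R^nq_{*}(T_{|I'}) = 0$ for $n\geq 1$, and we obtain
\begin{equation}
H^n(X(J'),q_{*}(T_{|I'})) \;=\; H^n(X(I'),T_{|I'}).
\end{equation}

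Composing these three identifications gives the desired canonical isomorphism $H^n(X(J),A)\cong H^n(X(I),T)$. The acyclicity statement follows immediately: if $T$ is acyclic, i.e.\ $H^n(X(I),T)=0$ for all $n\geq 1$, then so is $A$. I do not expect any serious obstacle here: the substance of the argument sits entirely in Theorems \ref{Thm: pull-back preserve coh} and \ref{Prop: maximum push-forward}, and the only thing to verify is the elementary identification $q^{-1}(\Lambda(j)) = U_j$ and the compatibility of the restriction $(-)_{|I'}$ with these cofinality steps, both of which are immediate from the definitions in Section \ref{Section : inv syst and sh}.
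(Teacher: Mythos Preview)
Your proof is correct and follows essentially the same route as the paper: reduce to $I'=I$ and $J'=J$ via Theorem \ref{Thm: pull-back preserve coh}, use the isomorphism $\phi$, and then invoke the push-forward comparison (the paper cites Corollary \ref{Prop.: f-lower-star exact with condition fibers directed}, which is just Theorem \ref{Prop: maximum push-forward} combined with the cofinal restriction you already performed). One small remark: Theorem \ref{Prop: maximum push-forward} gives the identification $H^n(X(J'),q_{*}F)=H^n(X(I'),F)$ directly, without passing through a statement about $R^nq_{*}$ vanishing, so that intermediate phrase is unnecessary.
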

\begin{proof}
By Theorem \ref{Thm: pull-back preserve coh} applied to 
the inclusions $I'\to I$ and $J'\to J$ we have  
$H^n(X(J),A)=H^n(X(J'),A_{|J'})$ and 
$H^n(X(I),T)=H^n(X(I'),T_{|J'})$, for all 
integer $n\geq 0$. 
Hence, we can assume $I=I'$ and $J=J'$. The claim then 
follows from Proposition \ref{Prop.: f-lower-star exact 
with condition fibers directed}.
\end{proof}

\section{Galois connections.}
\label{Section : Galois connections}
In this section we consider Galois connections between 
posets. This is a particularly lucky situation, 
because the operations of push-foward $f_*$ and the 
pull-back $g^{-1}$ coincide and we automatically have 
the benefits of both operations 
(cf. Proposition \ref{LEMMA : f and g} below). 
We begin by the following Lemma 
\ref{Lemma: directed subset N} which says that 
when we have a countable cofinal subset, we 
automatically have a Galois connection with a convenient 
countable totally ordered subset. 

\begin{lemma}\label{Lemma: directed subset N}
Let $J$ be a directed poset that admits a
countable cofinal subset. Then, there exists a countable 
cofinal subset $N\subset J$ which is \emph{directed} 
and totally ordered. 
The set $N$ is finite if, and only if, $J$ has a maximum 
element (in this case we can chose $N$ equal to the maximum 
element of $J$). Otherwise, $N$ is isomorphic 
to the poset of natural numbers 
$(\mathbb{N},\leq)$. Moreover, if $f:N\to J$ denotes 
the inclusion, then there exists a map 
$g:J\to N$ preserving the order relations and such that 
\begin{enumerate}
\item The map $g\circ f:N\to N$ is the identity map.
\item For all $j\in J$, 
$f^{-1}(\Lambda(j))=\Lambda(g(j))$, that is $g(j)$ is the biggest element of $f^{-1}(\Lambda(j))$.
%\hfill$\Box$
\end{enumerate}
\end{lemma}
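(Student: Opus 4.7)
The plan is to build $N$ as a chain obtained by a diagonal procedure against any fixed countable cofinal subset, then define $g$ by locating each $j\in J$ inside the chain $N$.

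First I would dispose of the degenerate case: if $J$ has a maximum element $m$, set $N:=\{m\}$ and take $g$ to be the constant map $j\mapsto m$; all the asserted properties are immediate. Henceforth assume $J$ has no maximum; the target is an $N$ isomorphic to $(\mathbb{N},\leq)$. Fix a countable cofinal subset $C=\{c_1,c_2,\ldots\}\subseteq J$ and construct $N=\{n_1,n_2,\ldots\}$ with $n_1<n_2<\cdots$ inductively: put $n_1:=c_1$, and assuming $n_k$ has been defined, use the directedness of $J$ to produce $n_{k+1}\in J$ with $n_{k+1}\geq n_k$ and $n_{k+1}\geq c_{k+1}$. Since $n_k$ is not maximal, I may first pick some $j_k>n_k$ and then let $n_{k+1}$ be an upper bound of $\{j_k,c_{k+1}\}$, ensuring the strict inequality $n_{k+1}>n_k$. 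By construction $N$ is a totally ordered chain of order type $\mathbb{N}$; it is cofinal in $J$ because given $j\in J$ one chooses $m$ with $c_m\geq j$ and observes $n_m\geq c_m\geq j$.

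Next I would set $g(j)$ equal to the maximum of the initial segment $\{n\in N: n\leq j\}$ of $N$ and verify the conclusions. The existence of this maximum rests on the fact that the initial segment is a proper subset of $N$: if $n_k\leq j$ for every $k$, cofinality selects $m$ with $n_m\geq j$, giving $n_m=j$ and hence $n_{m+1}>j$, which contradicts $n_{m+1}\leq j$. Since a proper initial segment of $(\mathbb{N},\leq)$ is finite, the maximum exists. Property (ii), $f^{-1}(\Lambda(j))=\Lambda(g(j))$, then merely restates the definition of $g$, and property (i), $g\circ f=\mathrm{id}_N$, follows because $n\in N$ is its own maximum in $\{n'\in N: n'\leq n\}$. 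Order preservation of $g$ is immediate from $j\leq j'\Rightarrow\{n\leq j\}\subseteq\{n\leq j'\}$.

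The delicate step, on which I would focus the most care, is ensuring that the initial segment $\{n\in N: n\leq j\}$ is non-empty for every $j\in J$, so that $g$ is defined on all of $J$ and property (ii) is vacuously compatible with the empty case. This is the main obstacle to the plan as stated, and it will dictate whether the starting element $n_1$ of the inductive construction must be selected with particular care (for instance, exploiting any structural minimum of $J$, or recasting the argument so that the construction of $N$ and the assignment $g$ are performed simultaneously by a single recursion compatible with the order relations).
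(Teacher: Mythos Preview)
Your construction of $N$ is essentially identical to the paper's: enumerate a countable cofinal set and inductively build an increasing chain dominating it. Your definition $g(j)=\max\{n\in N: n\leq j\}$ is the one forced by property~(ii) as written, and you have correctly isolated the only obstruction: this set may be empty. That obstruction is \emph{not} removable by a cleverer choice of $n_1$. Take $J=\mathbb{Z}$ with its usual order: any cofinal chain $N\subset\mathbb{Z}$ of order type $\mathbb{N}$ has a least element $n_1$, and for $j=n_1-1$ the set $\{n\in N:n\leq j\}$ is empty, so no $g(j)\in N$ can satisfy $f^{-1}(\Lambda(j))=\Lambda(g(j))$. Property~(ii) as stated therefore cannot hold for arbitrary directed $J$.

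The paper's own proof sidesteps this by defining $g$ the other way round: it sets $g(j)=\min\{\eta\in N:j\leq\eta\}$, the least element of $N$ \emph{above} $j$ rather than the greatest one below. This is always well-defined (cofinality of $N$ plus the well-ordering of $\mathbb{N}$), is order-preserving, and satisfies $g\circ f=\mathrm{id}_N$; but it does \emph{not} satisfy property~(ii) as written, since $g(j)\in\Lambda(g(j))$ while typically $g(j)\notin f^{-1}(\Lambda(j))$. What the paper's $g$ actually yields is the dual identity $g^{-1}(\Lambda(n))=\Lambda(f(n))$ for all $n\in N$: the pair $(g,f)$, rather than $(f,g)$, is the Galois connection. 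For the downstream application (Proposition~\ref{LEMMA : f and g}) one only needs \emph{some} Galois connection between $N$ and $J$, so the paper is correct in substance though the lemma is misstated. You were right to flag the gap; the resolution is to reverse the adjunction direction, not to rescue your definition of $g$.
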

%\if{
\begin{proof}
Let $S\subseteq I$ be a countable cofinal subset and let 
$S=\{s_1,s_2,\ldots\}$ be an enumeration of $S$. 
Set $\eta_1:=s_1$ and, for all integer 
$n\geq 2$, chose inductively an 
$\eta_n\in J$ such that $\eta_n\geq \eta_{n-1}$ and $\eta_n\geq s_3$. 
We now have an increasing sequence $(\eta_n)_n$ in 
$J$. Let $N\subset J$ be the set of its values. 
Then $N$ is cofinal in $J$ because $S$ is. 
Clearly $N$ is finite 
and totally ordered if, and only if, the sequence is 
stationary, 
and in this case its maximum is also a maximum of $J$. 
Otherwise, we may find a subsequence 
$(\eta_{n_k})_{k\in\mathbb{N}}$ of $(\eta_n)_n$ 
which is strictly increasing whose underling subset is $N$ 
and the map $k\to \eta_{n_k}$ provides a bijection 
between $\mathbb{N}$ and $N$ preserving the order 
relations. 

Now, as $N$ is cofinal, we have 
$J=\cup_{\eta\in N}\Lambda(\eta)$. Since $N$ is 
discrete and totally ordered, for every $j\in J$ there exists a minimum $\eta_j\in N$ such that $j\in\Lambda(\eta_j)$. 
Therefore, we can define a map $g:J\to N$ as 
$g(i)=\min(\eta\in N, i\in\Lambda(\eta))$. The claim follows.
\end{proof}
%}\fi

Recall that if 
\begin{equation}
f\;:\;I\xrightarrow{\quad}J\;\qquad\textrm{and}
\qquad
g\;:\;J\xrightarrow{\quad}I\;
\end{equation}
%$f:I\to J$ and $g:J\to I$ 
are two maps between 
posets that preserve the order relations, then the 
following conditions are equivalent
\begin{enumerate}
\item For all $i\in I$ and all $j\in J$ one has 
$f(g(j))\leq j$ and $g(f(i))\geq i$;
\item For all $i\in I$ and all $j\in J$ we have $f(i)\leq j$ 
if, and only if, $i\leq g(j)$.
\end{enumerate}
In this case, the pair $(f,g)$ is called a 
\emph{Galois connection} 
between $I$ and $J$. If $I$ and $J$ are seen as 
categories, these conditions express 
the fact that $f$ is a \emph{left adjoint} of $g$ and $g$ 
is a \emph{right adjoint} of $f$.
It is not hard to prove that a map $f:I\to J$, 
respecting the partial order relations, admits a right 
adjoint $g:J\to I$ if, and only if, the following condition 
holds
\begin{enumerate}
\item[iii)] For all $j\in J$, there exists $i_j\in I$ 
such that 
$f^{-1}(\Lambda(j))=\Lambda(i_j)$. 
\end{enumerate}
In this case, $i_j$ is the value of $g$ at $j$, so that for 
all $j\in J$ we have 
\begin{equation}
\label{eq : f^-1 lambda j = lambda g j}
f^{-1}(\Lambda(j))\;=\;\Lambda(g(j))\;.
\end{equation}
In particular, when the right adjoint $g$ exists, 
it is uniquely determined by 
\eqref{eq : f^-1 lambda j = lambda g j}. 
Symmetrically, $g:J\to I$ admits a left adjoint if, and 
only if, for all $i\in I$ there exists $j_i\in J$ such that 
$g^{-1}(V(i))=V(j_i)$ and in this case $f(i)=j_i$. 

%Notice that iii) shows that $f\circ g\circ f=f$ and 
%$g\circ f\circ g= g$.
 
\begin{proposition}
\label{LEMMA : f and g}
\label{Lemma: f*=g-1}
Let $(f,g)$ be a Galois connection as above.
Then
\begin{enumerate}
\item The functors $f_*:Sh(X(I))\to Sh(X(J))$ 
and $g^{-1}:Sh(X(I))\to Sh(X(J))$ coincide.
In particular, for every sheaf $F$ of $R$-modules 
over $X(I)$ we have 
\begin{equation}
f_*F\;=\;g^{-1}F\;.
\end{equation}

\item The conditions of Theorem \ref{Prop: 
maximum push-forward} are fulfilled and for every 
sheaf $F$ of $R$-modules over $X(I)$ the conclusions 
i) and ii) of Proposition \ref{Prop.: f-lower-star exact 
with condition fibers} hold.
\item If $I$ and $J$ are both directed posets, 
then $f_*$ preserves weakly flabbiness. 
In particular, it sends weakly flabby resolutions of 
$F$ into weakly flabby resolutions of $f_*F$.
\end{enumerate}
\end{proposition}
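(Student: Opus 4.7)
The plan is to exploit the Galois connection identity $f^{-1}(\Lambda(j))=\Lambda(g(j))$ (equation \eqref{eq : f^-1 lambda j = lambda g j}) as the linchpin, and derive the three assertions from the results already established earlier in the paper.

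For (i), I would compare $f_*F$ and $g^{-1}F$ stalkwise. By definition of push-forward, for every $j\in J$ one has $(f_*F)_j=F(f^{-1}(\Lambda(j)))$, and using the Galois connection this equals $F(\Lambda(g(j)))=F_{g(j)}$. On the other hand, by the explicit description of pull-back recalled in the paragraph preceding Lemma \ref{Lemma: cofinal pull-back}, we have $(g^{-1}F)_j=F_{g(j)}$. Then I would check that for $j\geq k$ in $J$ the transition map $\rho^{f_*F}_{j,k}$ is precisely $\rho^{F}_{g(j),g(k)}=\rho^{g^{-1}F}_{j,k}$: this is immediate since $g$ preserves order and both maps are induced by the universal property of the limit over $\Lambda(g(j))\supseteq\Lambda(g(k))$. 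Naturality in $F$ is also straightforward, giving the equality of functors $f_*=g^{-1}$.

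For (ii), I would simply observe that, thanks to \eqref{eq : f^-1 lambda j = lambda g j}, for every $j\in J$ the fiber $U_j=f^{-1}(\Lambda(j))=\Lambda(g(j))$ has $g(j)$ as a unique maximal element, so condition (ii) of Theorem \ref{Prop: maximum push-forward} is automatically met. Therefore the hypotheses of Proposition \ref{Prop.: f-lower-star exact with condition fibers} are satisfied, and both of its conclusions hold. No additional argument is needed here.

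For (iii), assuming both $I$ and $J$ are directed posets, I would invoke (i) to rewrite $f_*F=g^{-1}F$ and then apply Proposition \ref{Prop: inverse image of weakly flabby} to the order-preserving map $g:J\to I$ between directed posets: the pull-back of a weakly flabby sheaf along an order-preserving map of directed posets is weakly flabby. Thus if $F$ is weakly flabby on $X(I)$, so is $g^{-1}F=f_*F$ on $X(J)$. Applying this termwise to a weakly flabby resolution of $F$ and using that $f_*=g^{-1}$ is exact (as a pull-back) yields a weakly flabby resolution of $f_*F$, completing the proof.

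The only step requiring any real care is the stalkwise identification in (i), and even there the work is essentially bookkeeping once one invokes \eqref{eq : f^-1 lambda j = lambda g j}; the remaining assertions are then formal consequences of Theorem \ref{Prop: maximum push-forward} and Proposition \ref{Prop: inverse image of weakly flabby} respectively. The main conceptual point is that the Galois connection collapses two a priori different operations—$f_*$ and $g^{-1}$—into one, so that $f_*$ inherits both the exactness and the cohomology-preservation properties enjoyed by the pull-back along $g$.
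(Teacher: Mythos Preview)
Your proof is correct and follows essentially the same approach as the paper: both establish (i) by a stalkwise comparison using $f^{-1}(\Lambda(j))=\Lambda(g(j))$, deduce (ii) from the fact that each $U_j=\Lambda(g(j))$ has a unique maximal element, and obtain (iii) by combining (i) with Proposition~\ref{Prop: inverse image of weakly flabby}. Your write-up is slightly more explicit about the resolution clause in (iii), but the substance is identical.
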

\begin{proof}
Let us see $F$ as an inverse system 
$(\rho_{i,j}^F:F_{i}\to F_k)_{i,k\in I}$. 
Then, by definition, for all $j\in J$ 
both $f_*F$ and $g^{-1}F$ verify 
$(f_*F)_{j}=F_{g(j)}=(g^{-1}F)_j$ and, 
for all $j'\geq j$, one has 
$\rho_{j',j}^{f_*F}=\rho^{F}_{g(j'),g(j)}=
\rho^{g^{-1}F}_{j',j}$. Items i) and ii) follow 
immediately. In particular, $f_*$ is exact. 
To prove iii), it is then enough to show that
if $W$ is a weakly flabby sheaf of $R$-modules over 
$I$, then so is $f_*W$ on $J$. 
Since $f_*W=g^{-1}W$, this follows from Proposition 
\ref{Prop: inverse image of weakly flabby}.
\end{proof}

\begin{remark}
Lemma \ref{Lemma: 
directed subset N} admits the following generalization 
which does not involve any \emph{cofinality condition}.
Let $J$ be a directed poset and 
$f:\mathbb{N}\to J$ be an order preserving map 
satisfying the following condition: 
\begin{itemize}
\item[$\bullet$] 
For all $j\in J$, $f^{-1}(\Lambda(j))\neq 
\mathbb{N}$ (i.e. for all $j\in J$ there exists $n\in\mathbb{N}$ such that 
$f(n)\not\leq j$).
\end{itemize}
Then, by item iii) before \eqref{eq : f^-1 lambda j = 
lambda g j}, $f$ admits a right adjoint 
$g:J\to \mathbb{N}$ and 
Proposition \ref{LEMMA : f and g} applies.
\end{remark}

\section{An application to 
$p$-adic locally convex spaces}\label{Section : LC}

In this section we give an application to ultrametric 
locally convex spaces. It is an ultrametric analogous of a result of V.P.Palamodov \cite{Palamodov}.

An ultrametric absolute value on a field $K$ is a function 
$|.|:K\to\mathbb{R}_{\geq 0}$ verifying $|0|=0$, $|1|
=1$, $|xy|=|x||y|$, and $|x+y|\leq\max(|x|,|y|)$ for all 
$x,y\in K$. 
From now on we assume that the absolute value is non 
trivial (i.e. there exists $x\neq 0$ such that $|x|\neq 1$) 
and that $K$ is 
complete with respect to the topology defined by $|.|$. 
We denote by $\O_K=\{x\in K,|x|\leq 1\}$ its ring 
of integers.

An ultrametric seminorm on a $K$-vector space $V$ is a 
function $u:V\to 
\mathbb{R}_{\geq 0}$ such that for all $r\in K$ and  
$x,y\in V$ one has $u(rx)=|r|u(x)$ and $u(x+y)\leq 
\max(u(x),u(y))$.
A locally convex space over $K$ is a topological 
vector space $V$ whose topology is defined 
by a family of ultrametric semi-norms. Recall that $V$ 
has a basis of open neighborhoods of $0$ formed by 
$\O_K$-submodules, we call them \emph{convex opens}. 

A $K$-linear continuous map $f:V\to W$ between locally 
convex spaces is \emph{strict} 
if the topology induced 
by $W$ on the image of $f$ coincides with the quotient topology of $V$.

\begin{proposition}
Let $f:V\to W$ be a $K$-linear strict map between 
Hausdorff complete locally convex spaces. If the kernel 
of $f$ is a Fréchet space, then the image of $f$ is a 
Hausdorff complete closed subspace of $W$.
\end{proposition}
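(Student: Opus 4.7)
The plan is to reduce the proposition to completeness of the quotient $V/\ker(f)$, and then to derive this completeness from the vanishing of a first derived inverse limit via Corollary~\ref{Cor : THM 3}.

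First, since $f$ is strict, the induced map $V/\ker(f)\to \mathrm{im}(f)\subseteq W$ is a topological isomorphism of locally convex spaces. As a subspace of the Hausdorff space $W$, $\mathrm{im}(f)$ is automatically Hausdorff; and any complete subspace of a Hausdorff space is closed. Hence, setting $H:=\ker(f)$, it suffices to prove that $V/H$ is complete.

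Next, I would write $V$ as an inverse limit $V=\varprojlim_{i\in I}V_i$ of Banach spaces, where $I$ is the directed poset of continuous seminorms on $V$ and $V_i$ is the Banach completion of $V$ with respect to $p_i$. For each $i\in I$ let $\bar{H}_i\subseteq V_i$ be the closure of the image of $H$. The short exact sequence of inverse systems of Banach spaces
\begin{equation*}
0\to (\bar{H}_i)_{i\in I}\to (V_i)_{i\in I}\to (V_i/\bar{H}_i)_{i\in I}\to 0
\end{equation*}
yields, after taking inverse limits, a left-exact sequence
\begin{equation*}
0\to H\to V\to \varprojlim_{i\in I}(V_i/\bar{H}_i)\to \varprojlim_{i\in I}{}^{(1)}\bar{H}_i.
\end{equation*}
Each $V_i/\bar{H}_i$ is Banach, so $\varprojlim_{i\in I}(V_i/\bar{H}_i)$ is a complete Hausdorff locally convex space. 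Provided $\varprojlim_{i\in I}^{(1)}\bar{H}_i=0$, this sequence identifies $V/H$ with $\varprojlim_i V_i/\bar{H}_i$ topologically (once one also checks that the quotient seminorms on $V/H$ coincide with those induced on $\varprojlim_i V_i/\bar{H}_i$), exhibiting $V/H$ as complete.

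The crux is therefore the vanishing $\varprojlim_{i\in I}^{(1)}\bar{H}_i=0$, and this is where the Fr\'echet hypothesis on $H$ enters. Writing $H=\varprojlim_{n\in\mathbb{N}}H_n$ as an inverse limit of Banach spaces, each continuous seminorm $p_i\vert_H$ is dominated by some $q_{n(i)}$, so that $H\to V_i$ factors through a continuous map $H_{n(i)}\to V_i$ and $\bar{H}_i$ equals the closure in $V_i$ of the image of this map. Using this factorisation I would construct a directed auxiliary poset $\widetilde I$ with an inverse system $T$ of Banach spaces on it, together with an order-preserving map $q:\widetilde I\to I$, so that $(\bar H_i)_{i\in I}\cong q_*T$ and each fibre $U_i=q^{-1}(\Lambda(i))$ admits a countable cofinal directed subset on which $T$ satisfies the classical Mittag--Leffler condition. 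Corollary~\ref{Cor : THM 3} would then identify $\varprojlim_{i\in I}^{(n)}\bar H_i$ with $\varprojlim_{\widetilde I}^{(n)}T$, and the latter vanishes for $n\geq 1$ by Theorem~\ref{THM1}.

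The principal obstacle will be the explicit construction of $(\widetilde I,T,q)$ and the verification of the hypotheses of Corollary~\ref{Cor : THM 3}, in particular the fibrewise Mittag--Leffler condition. The latter reduces to a density property for the natural transition maps among the Banach spaces $H_n$ and $\bar H_i$, which is standard but needs to be checked uniformly in each fibre. Once this is in place, $V/H$ is complete, and consequently $\mathrm{im}(f)\subseteq W$ is Hausdorff, complete, and closed, as required.
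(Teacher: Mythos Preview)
Your reduction to the completeness of $V/H$ is correct and matches the paper. However, you are taking a harder route than necessary, and the hard part of your route (the construction of $(\widetilde I,T,q)$ and the fibrewise Mittag--Leffler check) is only sketched and has a genuine obstacle.

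The paper uses the \emph{pull-back} result (Corollary~\ref{Cor: THM 2}), not the push-forward one (Corollary~\ref{Cor : THM 3}). The key observation you are missing is this: work with the discrete quotients, not Banach completions. Let $I$ be the directed poset of open convex $\O_K$-submodules of $V$; for $D\in I$ set $D':=D\cap H$. The Hausdorff completion of $0\to H\to V\to V/H\to 0$ is the inverse limit over $I$ of $0\to H/D'\to V/D\to (V/H)/D''\to 0$, so it suffices that $\varprojlim_{D\in I}^{(1)}H/D'=0$. Now let $J$ be the poset of open convex submodules of $H$; the map $p:I\to J$, $D\mapsto D\cap H$, is surjective (strictness of the inclusion $H\hookrightarrow V$), and the system $(H/D')_{D\in I}$ is \emph{literally} $p^{-1}$ of the system $(H/D')_{D'\in J}$. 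Corollary~\ref{Cor: THM 2} gives $\varprojlim_{I}^{(n)}H/D'\cong\varprojlim_{J}^{(n)}H/D'$. Since $H$ is Fr\'echet, $J$ has a countable cofinal subset, and the transition maps $H/D'_1\to H/D'_2$ are surjective quotient maps, so classical Mittag--Leffler finishes.

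By contrast, your Banach-completion system $(\bar H_i)_{i\in I}$ has transition maps with only \emph{dense} image, so the Mittag--Leffler condition \eqref{eq :ML condition} is not available as stated; you would need an additional argument (or a topological variant of ML) precisely at the point you flag as ``the principal obstacle''. Moreover, the system $(\bar H_i)_{i\in I}$ is again naturally a pull-back (since $\bar H_i$ depends only on $p_i|_H$), so reaching for the push-forward Corollary~\ref{Cor : THM 3} is going against the grain. Switching to discrete quotients and to Corollary~\ref{Cor: THM 2} removes both difficulties at once.
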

\begin{proof}
Let $V'$ be the kernel of $f$ and $V''$ its image. 
It is enough to show that $V''$ is Hausdorff  and complete with respect 
to the quotient topology induced by $V$. 
For this it we prove that the strict 
short exact sequence $0\to V'\to V\to V''\to 0$ remains 
strict exact after the Hausdorff-completion operation. 
Indeed, $V'$ and $V$ are already Hausdoff and complete. 
Let $I$ be the family of convex 
neighborhoods of $0$ in $V$. The set $I$ is naturally 
partially ordered by the inclusion of subsets. 
For all $D\in I$, set 
$D':=D\cap V'$ and denote by $D''$ the image of 
$D$ in $V''$. 
\if{For all $D_1\subset D_2 \in I$ we have a morphism 
of short exact sequences
\begin{equation}
\xymatrix{0\ar[r]&V'/D'_1\ar[d]\ar[r]&V/D_1\ar[d]\ar[r]&V''/D_1''\ar[d]\ar[r]&0\\
0\ar[r]&V'/D_2'\ar[r]&V/D_2\ar[r]&V''/D_2''\ar[r]&0}
\end{equation}
where the vertical arrows are the natural projections.}\fi 
The Hausdorff completion of the sequence $0\to V'\to V\to V''\to 0$ 
is then the inverse limit of the sequences $0\to V'/D'\to 
V/D\to V''/D''\to 0$ for $D$ running in $I$. 
Let $J$ be the set of open neighborhoods of $V'$ 
of the form $p(D)=D\cap V'$ with $D\in I$. The map 
$p:I\to J$ is surjective and the inverse system 
$(V'/D')_{D\in I}$ is the pull-back of $(V'/D')_{D'\in J}$ 
by $p:I\to J$. The conditions of 
of Corollary \ref{Cor: THM 2} are fulfilled. 
It follows that for all $n\geq 0$ we have 
$\varprojlim_{D\in I}^{(n)}V'/D'=
\varprojlim_{D'\in J}^{(n)}V'/D'$. 
Now, since $V'$ is Hausdorff and Fréchet, then 
$J$ has a countable cofinal subset $N$. The transaction 
maps being surjective, Theorem \ref{THM1} applies and 
$\varprojlim_{D'\in J}^{(n)}V'/D'=0$ for all $n\geq 1$. The claim follows.
\end{proof}

\bibliographystyle{amsalpha}
\bibliography{2012-NP-III}
\end{document}